\documentclass[11pt]{article}

\usepackage[margin=1in]{geometry}
\usepackage{amsmath,amssymb,amsthm,mathtools}
\usepackage{enumitem}
\usepackage[colorlinks=true,linkcolor=blue,citecolor=blue,urlcolor=blue]{hyperref}
\usepackage{microtype}
\usepackage{natbib}
\newtheorem{theorem}{Theorem}[section]
\newtheorem{proposition}[theorem]{Proposition}
\newtheorem{lemma}[theorem]{Lemma}
\newtheorem{corollary}[theorem]{Corollary}
\newtheorem{remark}[theorem]{Remark}

\DeclareMathOperator{\Var}{Var}
\DeclareMathOperator{\Lip}{Lip}
\DeclareMathOperator{\im}{im}
\DeclareMathOperator{\Id}{Id}

\newcommand{\E}{\mathbb E}

\newcommand{\1}{\mathbf 1}
\newcommand{\norm}[1]{\left\lVert #1\right\rVert}
\newcommand{\ip}[2]{\left\langle #1,#2\right\rangle}

\title{Spectral Gap for the Binary Fixed-Margin Swap Chain}
\author{Weibo Fu \thanks{\href{mailto:wfu@math.princeton.edu}{wfu@math.princeton.edu}}\and Qian Qin \thanks{\href{mailto:qqin@umn.edu}{qqin@umn.edu}} \and Guanyang Wang \thanks{\href{mailto:guanyang.wang@rutgers.edu}{guanyang.wang@rutgers.edu}}}

\begin{document}
	\maketitle
	
	\begin{abstract}
		We prove an explicit spectral-gap lower bound for the lazy swap chain on binary matrices with prescribed row and column sums. This chain is a standard sampler for fixed-margin null models in ecology, statistics, and network analysis. Kannan, Tetali, and Vempala (KTV) conjectured that it mixes rapidly for all feasible margins \citep{kannan1997simple}. We show that for every feasible set of margins on an $m\times n$ binary matrix, the lazy swap chain has spectral gap at least
		$$
		\binom{m}{2}^{-1}\binom{n}{2}^{-1}.
		$$
		The bound is tight in the worst case. Thus, our result proves this KTV conjecture in a stronger quantitative form. The same spectral-gap bound also
		verifies the Mihail--Vazirani conjecture
		\citep{mihail1992expansion,kaibel2004expansion} for fixed-margin
		\(0/1\)-matrix polytopes.
		
		The proof gives a new route to fixed-margin sampling that avoids stability assumptions and canonical-path constructions. We compare the swap chain with a two-row heat-bath chain and use a local-to-global spectral reduction to reduce the analysis from arbitrary $m\times n$ matrices to a three-row problem. The remaining three-row inequality is then proved by separating the scalar column-count sector from the non-scalar Johnson harmonic sectors.

		The proof itself was generated by ChatGPT 5.5 Pro. ChatGPT proposed the whole proof strategy, including the comparison with the two-row heat-bath chain, the reduction to the three-row case, and the decomposition of the three-row function space into the count sector and the Johnson harmonic sectors. It also generated all the technical lemmas and initial proofs. The author's role was to pose the problem, guide the search direction, evaluate the AI-generated arguments, rewrite the proof, and take responsibility for the final form and validity of the result. The full proof of the main theorem has been formalized in Lean, and the
  accompanying formalization is available at the anonymous repository
  \url{https://github.com/guanyangwang/ktv-swap-lean}.		
		
	\end{abstract}
	
	\tableofcontents
	\section{Introduction}
	Binary matrices are a natural way to encode binary relations between two sets of objects. For example, in ecology, one may record which bird species are observed on which islands by an $m\times n$ binary matrix $A$, where $A_{ij}=1$ if species $i$ is present on island $j$, and $A_{ij}=0$ otherwise.
	
	A basic task is to generate uniformly random binary matrices with prescribed row and column sums. This problem arises naturally in ecological null models. Given an observed species--island matrix, ecologists ask whether its pattern reflects real ecological structure, such as competition between species, or can be explained by random chance after controlling for species prevalence and island richness. This leads to the fixed-margin null model, in which one samples uniformly from all binary matrices with the same row and column sums as the observed matrix. The same sampling problem also appears in testing the Rasch model for binary response data, where conditioning on the row and column sums makes all feasible matrices equally likely \citep{rasch1993probabilistic,besag1989generalized}. Related fixed-margin sampling problems arise in network reconstruction, economics, and the social sciences; see \citet{neal2024pattern} for a survey.
	
	The swap algorithm is the most basic Markov chain for this sampling problem. Starting from a binary matrix with the prescribed row and column sums, the algorithm repeatedly chooses two rows and two columns uniformly at random. If the resulting $2\times 2$ submatrix is of the form
	
	\[
	\begin{pmatrix}1&0\\0&1\end{pmatrix}
	\quad\text{or}\quad
	\begin{pmatrix}0&1\\1&0\end{pmatrix},
	\]
	then the algorithm replaces it by the other pattern; otherwise, it leaves the matrix unchanged.
	
The swap algorithm has a long history in statistics and applied sampling. Early work on random $0/1$-matrices with fixed marginals already used swap-based Markov chain Monte Carlo methods for exact conditional inference \citep{rao1996markov}. Despite this long history and wide use, the algorithm has resisted a complete worst-case mixing analysis. A basic open problem has been to determine how many steps are needed for the chain to approach the uniform distribution, or equivalently, to bound its mixing time uniformly over all feasible margins. In 1997, \citet{kannan1997simple} formulated this problem as a general conjecture, predicting that the swap chain mixes in polynomial time for every feasible choice of row and column sums. They also proved a polynomial mixing bound in the regular bipartite case. This conjecture has motivated a long line of work on swap or switch chains for fixed-margin binary matrices and closely related prescribed-margin models.

Progress has taken two somewhat different forms. One line of work proves sharp or near-sharp estimates under strong structural assumptions. In the regular bipartite case, \citet{kannan1997simple} proved a polynomial mixing bound. For sparse regular bipartite margins, \citet{dyer2021sampling} gave an improved mixing bound, and \citet{tikhomirov2023sharp} later proved sharp Poincar'e and log-Sobolev inequalities for the switch chain in a sparse regular regime.  A subsequent work of \citet{tikhomirov2024regularized}
	proved a sharp modified log-Sobolev estimate for fixed degree, giving
	\(O_d(n\log n)\) total-variation mixing.  A second line of work gives rapid-mixing results for broader classes of non-regular margins under suitable structural or stability assumptions. Early progress beyond the regular case includes the work of \citet{miklos2013towards}, who proved rapid mixing for semi-regular bipartite margins. Further results developed stability-based, structural, and comparison methods for larger families of irregular margins and related models, including bounded irregular bipartite and directed degree sequences \citep{erdHos2018efficiently,carstens2018speeding,erdHos2018new}. One major framework along this line is P-stability, introduced by \citet{jerrum1990fast}, which has led to rapid-mixing results for several large classes of margins \citep{amanatidis2019rapid,erdHos2022mixing,gao2021mixing}. These contributions form much of the current theoretical basis for the swap chain, but they still leave open the original conjecture for arbitrary row and column sums.

	The same fixed-margin state spaces have also been studied from counting and algorithmic-sampling viewpoints. On the enumeration side, one asks how many binary matrices have prescribed row and column sums, and studies the typical structure of a uniformly random matrix from this set; sharp asymptotic formulas are known in several sparse, dense, and irregular regimes \citep{greenhill2006asymptotic,canfield2008asymptotic,barvinok2010number}. On the algorithmic side, the general approximate counting and sampling problem is already known to admit polynomial-time randomized algorithms for arbitrary margins: one route goes through reductions to perfect matchings and the permanent, together with the algorithm of \citet{jerrum2004polynomial}, while \citet{bezakova2007sampling} gave a more direct simulated-annealing algorithm for binary contingency tables with prescribed row and column sums. These works show that the fixed-margin space can be sampled efficiently by more global algorithms. They do not, however, settle the KTV conjecture, which asks whether the natural local swap chain itself mixes rapidly for every feasible choice of margins.

	This paper proves a worst-case optimal spectral-gap bound for the swap chain on binary matrices with prescribed margins. For every feasible set of row and column sums on an $m\times n$ binary matrix, the lazy swap chain has spectral gap at least
	\[
	\binom{m}{2}^{-1}\binom{n}{2}^{-1}.
	\]
	This bound is tight in the worst case. In particular, it implies polynomial-time mixing in the matrix dimensions and resolves the binary-matrix case of the conjecture of \citet{kannan1997simple}. 
	In fact, it proves a result stronger than the conjecture itself. 
	
Beyond mixing, our result has a consequence for \(0/1\)-polytopes.  The
Mihail--Vazirani conjecture asserts that the graph of every \(0/1\)-polytope
has edge expansion at least one
\citep{mihail1992expansion,kaibel2004expansion}.  It has been proved for several natural exchange graphs.
Most notably, \citet{anari2024log} proved it for matroid base exchange graphs using
log-concave polynomials and high-dimensional walks. Although the fixed-margin
\(0/1\)-matrix polytopes considered here are not generally matroid base polytopes,
they form another basic exchange family. For fixed margins \(r,c\), let
\[
P_{r,c}
=
\operatorname{conv}\{X\in\{0,1\}^{m\times n}: X\mathbf 1=r,\ X^\top \mathbf 1=c\}
\]
be the associated fixed-margin \(0/1\)-matrix polytope. 
As a corollary of our spectral-gap bound, we prove the Mihail--Vazirani
conjecture for this family of polytopes.  The reason is that the lazy swap
chain assigns the same transition probability to every swap edge, and the
sharp spectral-gap lower bound forces the unweighted swap graph to have edge
expansion at least one.  Since every swap edge is also an edge of
\(P_{r,c}\), the full graph of \(P_{r,c}\) has edge expansion at least one as
well. We note that this conclusion relies on the sharp spectral gap estimate in our theorem, rather than on the polynomial mixing statement predicted by the original KTV conjecture alone.

The proof introduces a new route for analyzing Markov chains of this kind, one that is largely orthogonal to the stability-based and canonical-path methods used in much of the earlier work on swap chains. Rather than constructing multicommodity flows on the full state space, or proving rapid mixing under regularity assumptions on the margins, we compare the swap chain with a two-row heat-bath chain, equivalently the row-pair Curveball algorithm introduced in the ecology literature by \citet{strona2014fast}. We then apply a local-to-global spectral reduction \citep{caputo2008spectral}, which reduces the general $m\times n$ problem to the corresponding problem on three rows. In this way, the analysis of an arbitrary number of rows is reduced to a fixed three-row inequality, although this reduction requires a much sharper spectral-gap estimate in the reduced problem.	The remaining three-row problem is the only genuinely hard case. We solve it by decomposing the function space into a scalar column-count sector and non-scalar Johnson harmonic sectors. The count sector is controlled by a sandwitch path-coupling argument, while the harmonic sectors are controlled through representation-theoretic symmetry and exact Johnson-scheme identities. Thus, the proof replaces the global combinatorial geometry of the full swap graph by a small, highly structured spectral problem.
	
This approach is new in two ways. First, it shows that the worst-case behavior of the fixed-margin swap chain with an arbitrary number of rows is already captured by a three-row inequality. Second, it combines heat-bath comparison, a Caputo-type local-to-global reduction, and Johnson harmonic analysis in a form that does not require stability, density, regularity, or degree-sequence assumptions. The reduction is  close in spirit to the spectral-independence method: in both cases, one proves a global mixing or spectral-gap bound by controlling a family of lower-dimensional conditional problems and then propagating these local estimates back to the full chain \citep{anari2021spectral,blanca2022mixing,feng2022rapid,liu2023spectral}. Here, however, the local object is not an influence matrix, but a concrete three-row Markov chain whose full spectral structure can be analyzed. The method gives more than the original KTV conjecture asks for. It yields the sharp worst-case spectral-gap estimate for the swap chain and, at the same time, a spectral-gap bound for the row-pair Curveball chain. We expect the proof strategy to be useful for other constrained sampling
problems with exchange moves.  A natural next target is the swap chain for
contingency tables, where the states are nonnegative integer matrices with
fixed row and column sums; see, for example, the Markov-basis framework of
\citet{diaconis1998algebraic}.

Finally, we note that binary matrices with fixed row and column sums are equivalently bipartite graphs with fixed degrees. The conjecture of \citet{kannan1997simple} is part of a broader set of questions on switch chains for graph realizations with prescribed degrees. We refer to \citet{cooper2007sampling,greenhill2011polynomial,miklos2013towards,greenhill2014switch,greenhill2018switch,erdHos2018efficiently,amanatidis2019rapid,erdHos2022mixing,gao2021mixing} for important progress on these questions.

	\paragraph{Statement of AI use:} The proof ideas and the initial proofs were generated solely by GPT-5.5 Pro. Prompt history: \url{https://chatgpt.com/share/6a37fe0d-f98c-83ea-a481-f16f68e47ddb}. One author prompted GPT-5.5 Pro with the problem.
	The model first reviewed the known literature and focused on P-stable degree sequences, but this direction got stuck.
	The key turn came when author redirected the search with the prompt: \texttt{Maybe you need to think out of the box. You should consider other related areas instead of just focusing on combinatorics\\ and theoretical computer science literature, as the concept itself has also a clear algebraic structure.} After this redirection, GPT-5.5 Pro proposed the entire proof over a few rounds.

	\section{Main Result}
	
	Let
	\[
	\Omega(r,c)=
	\left\{X\in\{0,1\}^{m\times n}:\
	\sum_{j=1}^n X_{ij}=r_i\ \forall i,
	\quad
	\sum_{i=1}^m X_{ij}=c_j\ \forall j
	\right\}.
	\]
	Assume throughout that \(\Omega(r,c)\neq\varnothing\), a feasibility condition characterized by the classical Gale--Ryser theorem \citep{gale1957theorem,ryser1957combinatorial}.
	
	The lazy swap chain is the following reversible Markov chain on \(\Omega(r,c)\).  With probability \(1/2\) it stays put.  Otherwise it chooses an unordered pair of distinct rows \(\{a,b\}\) and an unordered pair of distinct columns \(\{u,v\}\), both uniformly.  If the resulting \(2\times2\) submatrix is
	\[
	\begin{pmatrix}1&0\\0&1\end{pmatrix}
	\quad\text{or}\quad
	\begin{pmatrix}0&1\\1&0\end{pmatrix},
	\]
	it replaces it by the other one.  If the chosen submatrix is not switchable, the chain stays put.  Let \(P_{\rm sw}\) denote its transition matrix. We use the following convention for the spectral gap. If
	\(|\Omega(r,c)|\ge 2\), then
	\[
	\gamma_{\rm sw}=1-\lambda_2(P_{\rm sw}),
	\]
	where \(\lambda_2(P_{\rm sw})\) is the second largest eigenvalue of \(P_{\rm sw}\).
	If \(|\Omega(r,c)|=1\), we set
	\[
	\gamma_{\rm sw}=1.
	\]

	The following result shows that the lazy chain is irreducible and aperiodic. Irreducibility is equivalent to the statement that any two binary matrices with the same row and column sums can be connected by a sequence of swap moves, which follows from \citet{ryser1957combinatorial}. Aperiodicity follows from the laziness of the chain.

	\begin{proposition}[Connectivity of the swap chain]\label{prop:connectivity}
		For every feasible pair of margins \((r,c)\), the non-lazy swaps connect \(\Omega(r,c)\).  Consequently the lazy swap chain is irreducible and aperiodic on \(\Omega(r,c)\).
	\end{proposition}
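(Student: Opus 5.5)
We prove connectivity with a potential-function argument on the symmetric difference of two matrices. Fix \(X,Y\in\Omega(r,c)\) with \(X\neq Y\), and let \(D=X-Y\in\{-1,0,1\}^{m\times n}\). Since \(X\) and \(Y\) have the same margins, every row and every column of \(D\) sums to zero. We show that some sequence of non-lazy swaps applied to \(X\) produces \(X'\in\Omega(r,c)\) with \(|X'-Y|_1<|X-Y|_1\). Induction on the integer \(|X-Y|_1=\sum_{ij}|D_{ij}|\) then gives a swap path from \(X\) to \(Y\). Each swap is reversible, since applying the same \(2\times2\) swap twice returns to the start. Hence the swap graph is undirected and connected.

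\textbf{Finding an alternating cycle.} View \(D\) as a bipartite multigraph on rows and columns. An entry \(D_{ij}=+1\) is a ``positive'' edge \(i\)--\(j\), and \(D_{ij}=-1\) is a ``negative'' edge. Zero line sums mean every vertex has equally many positive and negative incident edges. Start at a positive edge and walk, alternating positive and negative edges and never reusing an edge. Balance at each vertex means the walk can always continue until it revisits a vertex. From this we extract a simple cycle
\[
i_1,j_1,i_2,j_2,\dots,i_k,j_k,i_1
\]
with distinct rows and distinct columns, \(k\ge2\), \(D_{i_tj_t}=+1\) and \(D_{i_{t+1}j_t}=-1\) (indices mod \(k\)). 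Alternation can be enforced because a repeated vertex closes an even sub-walk. If that sub-walk has the wrong parity pattern at the junction, we take the other closed piece. This is the standard argument; we take the shortest such alternating cycle.

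\textbf{Shortening by a swap (main step).} Consider the submatrix on rows \(i_1,i_2\) and columns \(j_1,j_k\); in the case \(k=2\), these columns are \(j_1,j_2\). We have \(X_{i_1j_1}=1\), \(X_{i_2j_1}=0\), and \(X_{i_1j_k}=0\), since \(D_{i_1j_k}=-1\) in the cyclic labeling. Now look at \(X_{i_2j_k}\).

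If \(k=2\), then \(X_{i_2j_2}=1\), and the submatrix is switchable. The swap fixes four entries of \(D\), so \(|X-Y|_1\) drops by \(4\).

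If \(k\ge3\) and \(X_{i_2j_k}=1\), the swap on rows \(\{i_1,i_2\}\) and columns \(\{j_1,j_k\}\) is legal. It corrects \(D_{i_1j_1}\), \(D_{i_2j_1}\), and \(D_{i_1j_k}\), and it changes \(D_{i_2j_k}\) by \(1\) in absolute value. The net change in \(|X-Y|_1\) is therefore at most \(-3+1=-2\).

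If \(X_{i_2j_k}=0\), we use row \(i_2\) instead to shortcut the cycle. Since \(X_{i_2j_2}=1\), rows \(i_2,i_3,\dots,i_k\) together with chord \(i_2\)--\(j_k\) give a shorter closed alternating structure. More precisely, \(D_{i_2j_k}\) is either \(-1\) or \(0\) with \(Y_{i_2j_k}=0\). In the first case we contradict minimality of \(k\). In the second case we apply induction on \(k\) to the cycle \(i_2,j_2,\dots,i_k,j_k\) closed through the matrix entry \((i_2,j_k)\). That is, we perform swaps within this shorter cycle, which leaves the entry \((i_2,j_k)\) temporarily altered, and then finish with the swap above.

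This case analysis, making sure the chord-based shortcut strictly decreases a well-founded measure (lexicographically, \(|X-Y|_1\) then cycle length), is the step we expect to be most delicate. Alternatively, we may simply invoke \citet{ryser1957combinatorial}, whose interchange theorem states exactly this.

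Aperiodicity is immediate, because the holding probability is at least \(1/2\). Irreducibility follows from connectivity of the swap graph together with the fact that every swap edge has positive transition probability \(\frac12\binom m2^{-1}\binom n2^{-1}\).
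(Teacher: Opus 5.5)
The paper gives no argument of its own for this proposition. It obtains connectivity by citing Ryser's interchange theorem \citep{ryser1957combinatorial} and aperiodicity from laziness. That is exactly your fallback sentence together with your last paragraph.

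Your alternating-cycle argument is a genuinely different, self-contained route: it reproves Ryser's theorem. This frees you from the citation at the cost of length. The cycle extraction and the cases $k=2$ and $X_{i_2j_k}=1$ are correct. The third case, however, is not a valid induction as you wrote it. The closed structure through the chord $(i_2,j_k)$ is not an alternating cycle of $D=X-Y$. The sub-problem also has a target other than $Y$. So ``induction on $k$'' under a lexicographic measure does not literally apply to it.

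The fix is to do plain strong induction on $|X-Y|_1$ for the statement that $X$ and $Y$ are swap-connected. When $k\ge3$ and $X_{i_2j_k}=0$, let $X''$ be $X$ with the entries $(i_2,j_2),(i_3,j_2),\dots,(i_k,j_k),(i_2,j_k)$ flipped. These entries carry the pattern $1,0,\dots,1,0$ in $X$, so $X''\in\Omega(r,c)$, and
\[
|X-X''|_1=2k-2<2k\le|X-Y|_1,
\qquad
|X''-Y|_1\le|X-Y|_1-2k+4<|X-Y|_1 .
\]
Both pairs $(X,X'')$ and $(X'',Y)$ are then connected by the induction hypothesis. This holds whatever the value of $Y_{i_2j_k}$. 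So you need neither the minimality of the cycle nor a separate induction on $k$.

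Also, the ``wrong parity pattern at the junction'' never occurs. The closed sub-walk has even length because the graph is bipartite. Since signs alternate along the walk, its first and last edges automatically have opposite signs.
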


	Our main result is the following spectral-gap bound for the lazy swap chain.
	
	\begin{theorem}[Main theorem]\label{thm:main}
		For every feasible pair of margins \((r,c)\), the lazy swap chain satisfies
		\[
		\gamma_{\rm sw}\ge \binom{m}{2}^{-1}\binom{n}{2}^{-1}
		\]
		whenever \(m,n\ge2\).  Consequently,
		\[
		t_{\rm mix}(\varepsilon)
		\le \binom{m}{2}\binom{n}{2}
		\left(\log |\Omega(r,c)|+\log \varepsilon^{-1}\right)
		\le \binom{m}{2}\binom{n}{2}
		\left(mn\log2+
		\log \varepsilon^{-1}\right).
		\]
	\end{theorem}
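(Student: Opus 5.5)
We follow the route outlined in the introduction: comparison with a two-row heat-bath chain, a Caputo-type local-to-global reduction to three rows, and a spectral analysis of the three-row problem. The mixing-time consequence is routine once the gap bound is known. For a lazy reversible chain with uniform stationary law $\pi$, the standard estimate gives
\[
t_{\rm mix}(\varepsilon)\le \gamma_{\rm sw}^{-1}\log\bigl(1/(\varepsilon\pi_{\min})\bigr),
\qquad \pi_{\min}=1/|\Omega(r,c)|,
\]
and $|\Omega(r,c)|\le 2^{mn}$. Irreducibility, needed so that $\gamma_{\rm sw}>0$ is meaningful, comes from Proposition~\ref{prop:connectivity}. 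So everything reduces to the Dirichlet-form inequality
\[
\mathcal E_{\rm sw}(f,f)\ge \binom{m}{2}^{-1}\binom{n}{2}^{-1}\Var_\pi(f).
\]

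\emph{Step 1: heat-bath comparison.} Write $\mathcal E_{\rm sw}=\frac12\binom m2^{-1}\sum_{\{a,b\}}\mathcal E_{ab}$, where $\mathcal E_{ab}$ is the Dirichlet form of the two-row swap walk. The walk on rows $a,b$ depends only on the set $D$ of columns where the two rows differ. Let $k=|D|$ and let $s$ be the number of $1$'s row $a$ has in $D$. The walk is then the Bernoulli--Laplace (Johnson) walk on $\binom{D}{s}$, with rate $\binom n2^{-1}$ per swap. Its gap is known exactly: it equals $k/\binom n2$ times the normalized Johnson gap, and this is at least $\binom n2^{-1}\cdot$(heat-bath gap). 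I would therefore aim to show that $\mathcal E_{ab}\ge \frac{2}{\binom n2}\cdot$ the Dirichlet form of the Curveball chain, which resamples the pair of rows uniformly given the other rows. This reduces the theorem to a gap of order $1/\binom m2$ (suitably normalized) for the row-pair heat-bath chain. The worst case $k=2$, $s=1$ is exactly where the constant is tight.

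\emph{Step 2: local-to-global.} Next we show that the pair heat-bath chain on $m$ rows has gap at least a constant times $1/\binom m2$ matching the target. Here I would apply the Caputo--Liggett--Richthammer style induction: for each row $i$, condition on row $i$ and average the $(m-1)$-row gaps. This yields a recursion
\[
\gamma_m\ge \frac{m-2}{m}\,\gamma_{m-1}\cdot(\text{correction}),
\]
which closes provided the base case $m=3$ holds with a \emph{sharp} constant. The correction term is controlled by the spectral behavior of the projection onto functions of a single row. The choice of normalization must make the identity $\binom m2^{-1}\sum_{\text{pairs}}=\binom m3^{-1}\sum_{\text{triples}}\binom{3}{2}^{-1}\sum$ telescope. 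The result is that the three-row inequality must hold with no loss.

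\emph{Step 3: the three-row inequality (main obstacle).} On three rows, a state is determined by the column-type pattern. The row-pair resampling operators $Q_{12},Q_{13},Q_{23}$ are conditional expectations, and we need $\lambda_{\max}$ of $\frac13(Q_{12}+Q_{13}+Q_{23})$ on mean-zero functions to be at most the sharp threshold. I would decompose $L^2$ under the symmetric group acting on columns within each column-type class. The first piece is the count sector: functions of the number of columns of each type, which is a scalar birth--death type chain. For this sector I would try a monotone (sandwich) path coupling showing contraction at the required rate. The remaining pieces are the nonscalar sectors, which are Johnson harmonic spaces, spherical-function components of degree $\ge1$. On these, each $Q_{ab}$ acts through explicit Hahn-polynomial eigenvalues. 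I would try to prove the bound there by the exact Johnson identities, showing that on degree-$\ell$ harmonics the three projections are nearly orthogonal. Such near-orthogonality gives norm at most $(1+2\cos\theta)/3$ with an explicit angle. The delicate part is the count sector near extremal margins, which is where I expect most effort.
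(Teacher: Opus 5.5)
Your Step~1 is sound and differs only harmlessly from Lemma~\ref{lem:two-column}. The two-row swap walk is the Bernoulli--Laplace walk on $\binom{D}{s}$, whose non-lazy gap is exactly $k/\binom n2\ge 2/\binom n2$ on any nontrivial fiber. So the known Johnson spectrum replaces the paper's path coupling and gives the same bound $\gamma_{\rm sw}\ge\binom n2^{-1}\gamma_H$. The mixing-time deduction is also fine.

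\textbf{Step 2 has a genuine gap.} Conditioning on one row and using the $(m-1)$-row bound gives exactly
\[
\mathcal E_H(f)\ \ge\ \gamma_{m-1}\,\frac1m\sum_{i=1}^m\E\bigl[\Var(f\mid\text{row } i)\bigr]
=\gamma_{m-1}\Bigl(\Var(f)-\frac1m\sum_{i=1}^m\Var\bigl(\E[f\mid\text{row } i]\bigr)\Bigr).
\]
To reach $\binom m2^{-1}=\frac{m-2}{m}\binom{m-1}2^{-1}$ you must therefore prove $\sum_{i=1}^m\Var(\E[f\mid\text{row } i])\le2\Var(f)$ for every feasible $m$-row system and every $m$.
\begin{itemize}
\item For $m=3$ this is Theorem~\ref{thm:three-row}.
\item For $m\ge4$ it is a new inequality at each level. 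It is exactly your unspecified ``correction'', and nothing in the plan supplies it, so the recursion does not close from the base case alone.
\item Your pairs-versus-triples averaging identity only compares pair updates with triple-block updates, and bounding the triple-block gap leads to the same per-level requirement.
\end{itemize}
The missing idea is the one in Lemma~\ref{lem:three-row-reduction}. Set $a_{\theta,\theta'}=\ip{R_\theta f}{R_{\theta'}f}$. The three-row gap, in the form $\norm{Q_H^{(T)}f}_2^2\ge\ip{f}{Q_H^{(T)}f}$ summed over triples, shows that the overlapping off-diagonal terms have nonnegative total. For disjoint $\theta,\theta'$ the conditional expectations commute, so $R_\theta R_{\theta'}\succeq0$ and $a_{\theta,\theta'}\ge0$. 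Together these give $\norm{Q_Hf}_2^2\ge\ip{f}{Q_Hf}$ using only the three-row input.

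\textbf{The non-scalar part of Step 3 also has a gap.} A bound $(1+2\cos\theta)/3\le2/3$ requires cosine at most $1/2$ between every pair of row subspaces in a sector, and that is false. Take $q=0$, $|A_3|=1$ with $A_3=\{x_3\}$, $|A_1|=\alpha\approx p/2$, and $h\in H_P^1$. Then $F_1=h^{(\alpha)}(A_1)$ and $F_2=h^{(p-1-\alpha)}(A_2)$ satisfy $F_1+F_2=-h(x_3)$ with $\norm{h(x_3)}_2/\norm{F_1}_2=O(p^{-1/2})$. So their correlation tends to $-1$, while row $3$ is still active in the $(1,0)$ sector. Moreover, inside a sector each $P_i$ is a projection that mixes the multiplicity indices $(i,s)$; it does not act by a scalar Hahn eigenvalue.

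The paper instead proves a joint bound:
\begin{itemize}
\item Lemma~\ref{lem:disjoint-average} gives the exact intertwining $ST=T\bigl(I+(-1)^{\ell+m}C\bigr)$.
\item The inequality $(x)_\ell(u)_m+(y)_\ell(v)_m\le(x+y)_\ell(u+v)_m$ makes $C$ row-substochastic (Lemma~\ref{lem:row-substochastic}).
\item Self-adjointness of $S$ on the invariant sector (Lemma~\ref{lem:quotient}) then caps its eigenvalues at $2$.
\end{itemize}
This bound is tight precisely in the aligned situations that defeat a pairwise-angle argument.

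Your count-sector idea (sandwich path coupling) does match the paper. Note, though, that the chain lives on the two-dimensional simplex $\{x_1+x_2+x_3=N\}$ with weights $\prod_i1/(x_i!(x_i+\delta_i)!)$, not on a birth--death line. Log-concavity gives a uniform $2/3$ Lipschitz contraction, so extremal margins are not where the difficulty lies. You would also need orthogonality of the sectors across different rows (Lemma~\ref{lem:sector-decomp}) and the fact that $K_3$ vanishes on $\bigl(\sum_iL^2(\mathcal F_i)\bigr)^\perp$.
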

	
	\begin{remark}[Sharpness of the constant]
		The bound in Theorem~\ref{thm:main} is sharp in the worst case. Indeed, fix two rows and two columns, say
		rows \(1,2\) and columns \(1,2\), and take margins
		\[
		r_1=r_2=1,\qquad r_3=\cdots=r_m=0,
		\]
		and
		\[
		c_1=c_2=1,\qquad c_3=\cdots=c_n=0.
		\]
		Then the state space consists of exactly the two matrices whose only
		nonzero entries form one of the two \(2\times 2\) permutation matrices
		inside rows \(1,2\) and columns \(1,2\). Thus the unique nontrivial move is
		the swap on this single checkerboard block.
		
		For the lazy swap chain, from either state the probability of moving to
		the other state is
		\[
		p=\frac12\binom m2^{-1}\binom n2^{-1},
		\]
		because the chain must first avoid the lazy step and then choose the
		unique active row pair and column pair. Hence the transition matrix on this
		two-point space is
		\[
		\begin{pmatrix}
			1-p & p\\
			p & 1-p
		\end{pmatrix},
		\]
		whose spectral gap is \(2p\). Therefore
		\[
		\gamma_{\mathrm{sw}}
		=
		\binom m2^{-1}\binom n2^{-1}.
		\]
		Thus no larger uniform lower bound, even by a constant factor in this
		normalization, can hold for all feasible margins.
	\end{remark}
	
	For a finite graph \(G=(V,E)\) and a set \(U\subseteq V\), write
	\[
	\partial_G U
	=
	\{\{x,y\}\in E:x\in U,\ y\notin U\}.
	\]
	We say that \(G\) has edge expansion at least one if
	\[
	|\partial_G U|\ge |U|
	\]
	for every \(U\subseteq V\) with \(0<|U|\le |V|/2\).
	
	\begin{corollary}[Mihail--Vazirani for fixed-margin \(0/1\)-matrix polytopes]
		\label{cor:MV-fixed-margin}
		For every feasible pair of margins \((r,c)\), let
		\[
		P_{r,c}
		=
		\operatorname{conv}\{X\in\{0,1\}^{m\times n}: X\mathbf 1=r,\ X^\top \mathbf 1=c\}.
		\]
		Then the graph of \(P_{r,c}\) has edge expansion at least one.  Equivalently,
		for every set \(U\) of vertices of \(P_{r,c}\) with
		\(0<|U|\le |V(P_{r,c})|/2\), at least \(|U|\) edges of the graph of
		\(P_{r,c}\) leave \(U\).  Thus the Mihail--Vazirani conjecture holds for
		fixed-margin \(0/1\)-matrix polytopes.
	\end{corollary}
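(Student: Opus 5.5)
We derive Corollary~\ref{cor:MV-fixed-margin} from Theorem~\ref{thm:main} through the standard Cheeger-type (easy direction) bound relating spectral gap to conductance, together with the observation that swap moves are polytope edges. Set $N=\binom m2\binom n2$ and let $G_{\rm sw}$ be the unweighted swap graph on $\Omega(r,c)$: $X\sim Y$ iff they differ by one checkerboard swap. If $m<2$ or $n<2$, or $|\Omega(r,c)|=1$, the statement is vacuous, since then $|\Omega|=1$ and no $U$ with $0<|U|\le|V|/2$ exists. So we assume $m,n\ge 2$ and $|\Omega|\ge2$.

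The first step is to note that every swap edge $\{X,Y\}$ is an edge of $P_{r,c}$. The vertices of $P_{r,c}$ are exactly $\Omega(r,c)$, because every $0/1$ point is a vertex of the cube and hence of any $0/1$ polytope containing it. For the edge claim, take the linear functional $w(Z)=\sum_{ij}w_{ij}Z_{ij}$ with $w_{ij}=1$ if $X_{ij}=Y_{ij}=1$, $w_{ij}=-1$ if $X_{ij}=Y_{ij}=0$, and $w_{ij}=0$ on the four swapped cells. Any $Z\in\Omega$ maximizing $w$ agrees with $X$ off the $2\times2$ block. The margins then force the block to be one of the two permutation patterns, so $Z\in\{X,Y\}$, and the face maximizing $w$ is the segment $[X,Y]$. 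Since the vertex sets coincide and $E(G_{\rm sw})\subseteq E(P_{r,c})$, we get $|\partial_{P}U|\ge|\partial_{G_{\rm sw}}U|$. It therefore suffices to prove expansion at least one for $G_{\rm sw}$.

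The second step is to apply the Rayleigh quotient. The chain $P_{\rm sw}$ has uniform stationary distribution $\pi$, and every swap edge has transition probability exactly $\frac1{2N}$. Each edge corresponds to a unique unordered pair of rows and columns, and the same choice reverses the move. For $U$ with $0<|U|\le|\Omega|/2$, test the variational characterization $\gamma_{\rm sw}\le \mathcal E(f,f)/\Var_\pi(f)$ on $f=\1_U$. Here
\[
\mathcal E(f,f)=\sum_{x\in U,y\notin U}\pi(x)P_{\rm sw}(x,y)=\frac{|\partial_{G_{\rm sw}}U|}{2N|\Omega|},\qquad
\Var_\pi(f)=\frac{|U|}{|\Omega|}\Big(1-\frac{|U|}{|\Omega|}\Big)\ge\frac{|U|}{2|\Omega|}.
\]
Hence $\gamma_{\rm sw}\le |\partial U|/(N|U|)$. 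Combining this with Theorem~\ref{thm:main}, $\gamma_{\rm sw}\ge 1/N$, gives $|\partial_{G_{\rm sw}}U|\ge|U|$.

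There is no real obstacle here; all the difficulty lies in Theorem~\ref{thm:main}. The step needing the most care is the face argument: the functional must isolate exactly $X$ and $Y$, and we must confirm that row and column sums restricted to the block admit only the two checkerboard completions. The second point holds because the block's row sums and column sums are all $1$.
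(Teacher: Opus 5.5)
Your proposal is correct and follows essentially the same route as the paper. Both arguments test the spectral gap from Theorem~\ref{thm:main} on \(f=\1_U\) to show the unweighted swap graph has expansion at least one, and both use the same exposing functional to show each swap is an edge of \(P_{r,c}\). The differences are cosmetic: you work directly with the lazy chain's Dirichlet form (edge weight \(1/(2N)\)) rather than passing to the non-lazy chain, and you identify the vertex set via extreme points of the cube rather than by the paper's direct convex-combination argument.
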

	
	\section{Comparison to a Two-row Heat-bath Chain}
	
	\subsection{The two-row heat-bath chain}

	We compare $\gamma_{\rm sw}$ to the spectral gap of the following two-row heat-bath chain targeting the uniform distribution on $\Omega(r,c)$.
	Given the current binary matrix $X \in \Omega(r,c)$, choose an unordered pair \(\{a,b\} \subset [m]\) uniformly, where $a \neq b$; freeze all rows except rows \(a,b\); then resample rows \(a,b\) uniformly among all completions satisfying $\sum_{j=1}^n X_{aj} = r_a$, $\sum_{j=1}^n X_{bj} = r_b$, and $X_{aj} + X_{bj} = c_j - \sum_{i \not\in\{a,b\}} X_{ij}$ for $j \in \{1,\dots,n\}$. 
	
	Let \(P_H\) be its transition matrix and let
	\[
	\gamma_H=1-\lambda_2(P_H).
	\]
	Then $P_H$ corresponds to a Gibbs algorithm that updates two coordinates (rows) at a time, while $P_{\rm sw}$ corresponds to a hybrid version of this Gibbs algorithm, with the resampling of the two selected rows replaced by a possible column swap.
	We use a decomposition technique \citep{qin2024spectral} to compare their spectral gaps.
	For similar techniques, see, e.g., \cite{caracciolo1992two,madras2002markov,ge2018simulated,andrieu2018uniform}, and the literature on spectral independence \citep{anari2021spectral,blanca2022mixing,feng2022rapid,liu2023spectral}.
	
	\subsection{Spectral gap comparison}
	
	For a matrix $X \in \Omega(r,c)$ and $\{a,b\} \subset [m]$, let $X_{-\{a,b\}}$ be~$X$ excluding rows $a,b$.
	The transition matrix $P_H$ may be written as
	\[
	P_H(X, X') = {m \choose 2}^{-1} \sum_{\substack{\{a,b\} \subset [m] \\ a \neq b}} \pi^{a,b}(X'
	\mid X_{-\{a,b\}} ),
	\]
	where $\pi^{a,b}(\cdot \mid z)$ is the uniform distribution on $\{X \in \Omega(r,c): \, X_{-\{a,b\}} = z\}$.
	
	The transition matrix $P_{\rm sw}$ can be written into a similar form.
	For $\{a,b\} \subset [m]$ and a value of $X_{-\{a,b\}}$ denoted by~$z$, let $P^{a,b}_{\rm sw}(\cdot, \cdot \mid z)$ be the transition matrix of the following Markov chain on the support of $\pi^{a,b}(\cdot \mid z)$:
	Given the current state $X \in \Omega(r,c)$ satisfying $X_{-\{a,b\}} = z$, with probability $1/2$, stay put.
	Otherwise, uniformly select a pair of distinct indices $(u,v)$ from $[n]$; if 
	\[
	\begin{pmatrix} X_{au} & X_{av} \\X_{bu}& X_{bv}\end{pmatrix} = \begin{pmatrix}1&0\\0&1\end{pmatrix}
	\; \text{or} \;
	\begin{pmatrix}0&1\\1&0\end{pmatrix},
	\]
	replace the submatrix with the other one. 
	Then
	\[
	P_{\rm sw}(X, X') = {m \choose 2}^{-1} \sum_{\substack{\{a,b\} \subset [m] \\ a \neq b}} P^{a,b}_{\rm sw}(X, X' \mid X_{-\{a,b\}} ).
	\]
	
	We compare the two Markov chains in terms of Dirichlet forms.
	For a reversible Markov chain with transition matrix \(M\) and stationary
	measure \(\varpi\), and a function $f$, define the Dirichlet form
	\[
	\mathcal E_M(f,f)
	=
	\frac12\sum_{x,y}\varpi(x) \, M(x,y) \, [f(x)-f(y)]^2 .
	\]
	
	\begin{lemma}[Comparison with heat bath]\label{lem:two-column}
		We have
		\[
		\gamma_{\rm sw} \ge \binom n2^{-1}\gamma_H .
		\]
	\end{lemma}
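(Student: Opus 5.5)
The plan is to compare Dirichlet forms fiber by fiber, using the decompositions of \(P_H\) and \(P_{\rm sw}\) over row pairs \(\{a,b\}\) given above. Since both chains are averages over \(\{a,b\}\) of kernels that only move rows \(a,b\) and keep \(X_{-\{a,b\}}=z\) fixed, we have
\[
\mathcal E_{P_{\rm sw}}(f,f)=\binom m2^{-1}\sum_{\{a,b\}}\sum_z \pi(z)\,\mathcal E_{P^{a,b}_{\rm sw}(\cdot,\cdot\mid z)}(f_z,f_z),
\]
and the analogous identity for \(P_H\) with \(P^{a,b}_{\rm sw}\) replaced by the projection \(\pi^{a,b}(\cdot\mid z)\). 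Here \(f_z\) is the restriction of \(f\) to the fiber and \(\pi(z)\) is the marginal of \(z\). For the heat-bath kernel, the fiber Dirichlet form is exactly the conditional variance \(\Var_{\pi^{a,b}(\cdot\mid z)}(f_z)\). So it suffices to prove, for every fiber,
\[
\mathcal E_{P^{a,b}_{\rm sw}(\cdot,\cdot\mid z)}(g,g)\ \ge\ \binom n2^{-1}\Var_{\pi^{a,b}(\cdot\mid z)}(g),
\]
that is, the two-row lazy swap chain on a fiber has spectral gap at least \(\binom n2^{-1}\). Summing this bound gives \(\mathcal E_{P_{\rm sw}}\ge\binom n2^{-1}\mathcal E_{P_H}\). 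Both chains have the same stationary law, so the variational characterization of the gap yields \(\gamma_{\rm sw}\ge\binom n2^{-1}\gamma_H\). Only the nonnegativity of \(P_{\rm sw}\) is needed here, because \(\gamma\) is defined via \(\lambda_2\); Dirichlet forms control \(\lambda_2\) directly.

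Next I identify the fiber. With rows \(a,b\) free and column totals \(s_j=c_j-\sum_{i\notin\{a,b\}}z_{ij}\in\{0,1,2\}\), the columns with \(s_j\in\{0,2\}\) are forced. On the set \(J\) of columns with \(s_j=1\), of size \(k\), row \(a\) is the indicator of a subset \(S\subseteq J\) of fixed size \(\ell\), and row \(b\) is the complement. The fiber is therefore the Johnson graph \(J(k,\ell)\), carrying the uniform measure. A swap on columns \(\{u,v\}\) is switchable exactly when \(u\in S\) and \(v\in J\setminus S\) (or the reverse), and the result is \(S\triangle\{u,v\}\). Every Johnson edge is therefore realized by exactly one unordered column pair. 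Including the \(1/2\) laziness, the move probability along each edge is \(\tfrac12\binom n2^{-1}\).

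Consequently \(P^{a,b}_{\rm sw}=I+\tfrac12\binom n2^{-1}(A-\ell(k-\ell)I)\), where \(A\) is the adjacency matrix of \(J(k,\ell)\). The Johnson scheme eigenvalues are \((\ell-i)(k-\ell-i)-i\) for \(i=0,\dots,\min(\ell,k-\ell)\). The smallest nonzero Laplacian eigenvalue \(\ell(k-\ell)-\lambda_1\) therefore equals \(k\), which is at least \(2\) whenever the fiber is nontrivial. The fiber gap is then at least \(\tfrac12\binom n2^{-1}\cdot k\ge\binom n2^{-1}\). Trivial fibers with \(\ell\in\{0,k\}\) have zero variance and impose no condition.

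The main obstacle is only bookkeeping. One must check that the conditional Dirichlet form decomposition holds with the correct normalization, since \(P_{\rm sw}\) picks the row pair uniformly and the fiber kernel uses \((u,v)\) uniform among unordered pairs. One must also handle the laziness and the convention \(\gamma=1\) for singleton spaces. The Johnson spectral gap \(k\) is classical. Alternatively, it can be obtained by path coupling on \(J(k,\ell)\), or from the known gap of the Bernoulli–Laplace model.
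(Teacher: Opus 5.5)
Your proof is correct. Its skeleton matches the paper's: both decompose the two Dirichlet forms over row pairs $\{a,b\}$ and frozen values $z$, both identify the heat-bath fiber form with $\Var_{\pi^{a,b}(\cdot\mid z)}$, and both reduce the lemma to a lower bound $\binom n2^{-1}$ on the gap of the lazy two-row swap chain on fixed-size subsets of the set $T$ of sum-one columns.

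The difference is only in how that local gap is obtained.

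\emph{Paper.} The paper uses path coupling on adjacent subsets $C\cup\{u\}$, $C\cup\{u'\}$. It credits only the two moves that land directly on the other state, which gives contraction $1-2p=1-\binom n2^{-1}$ and needs no spectral information.

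\emph{Your route.} You read the gap off the Johnson-scheme eigenvalues, which gives the exact local gap $\tfrac{|T|}{2}\binom n2^{-1}$ (your $k$ is the paper's $|T|$). This is sharper when $|T|>2$ but equals the paper's bound in the worst case $|T|=2$. So for the lemma as stated it gains precision rather than strength, at the cost of importing the classical Johnson spectrum.

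The paper's coupling in fact already reaches the same exact value if one counts two further families of paired moves, both of which also coalesce:
\begin{itemize}
\item the moves replacing $u$ (from $C\cup\{u\}$), resp.\ $u'$ (from $C\cup\{u'\}$), by the same $w\notin C\cup\{u,u'\}$;
\item the moves replacing the same $v\in C$ by $u'$, resp.\ by $u$.
\end{itemize}
Together with the two direct moves, these give contraction $1-|T|p$.

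Your remark that ``only the nonnegativity of $P_{\rm sw}$ is needed'' is superfluous. The identity $1-\lambda_2=\inf_f\mathcal E(f,f)/\Var(f)$ holds for every reversible kernel, so no positivity assumption enters.
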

	
	\begin{proof}

		Fix a pair of indices $\{a,b\} \subset [m]$ where $a \neq b$.
		Consider a matrix~$X$ in $\Omega(r,c)$ with $X_{-\{a,b\}}$ frozen at some value $z$.
		For each
		column \(j\), the value \(X_{aj}+X_{bj}\) is then fixed. Columns with value
		\(0\) or \(2\) are forced, while columns with value \(1\) each have two possible configurations.
		Let
		\[
		T=\{j:X_{aj}+X_{bj}=1\}.
		\]
		The number of $j$'s in \(T\) such that $X_{aj} = 1$ is fixed due to the restriction $\sum_{j=1}^n X_{aj} = r_a$; call it \(k\).
		Thus we may represent rows $a,b$ as a set
		\[
		S\subseteq T,\qquad |S|=k,
		\]
		where \(j\in S\) means \(X_{aj}=1\) and \(X_{bj}=0\). Let \(\nu\) be the
		uniform measure on the collection of all such \(S\), and write \(g(S)\) for the corresponding
		value of \(f\).

		Having frozen all other rows, the two-row heat-bath $P_H$ update replaces the current value of \(S\) by an independent uniform element
		of \(\{S' \subseteq T:|S'|=k\}\). Hence its conditional Dirichlet form is
		\[
		\begin{aligned}
			\mathcal E_{H}^{a,b}(f,f \mid z) :=& \frac{1}{2} \sum_{X,X'} \pi^{a,b}(X \mid z) \, \pi^{a,b}(X' \mid z) \, [f(X) - f(X')]^2 \\
			=& \frac{1}{2} \sum_{S,S'} \nu(\{S\}) \, \nu(\{S'\}) \, [g(S) - g(S')]^2 = \operatorname{Var}_{\nu}(g),
		\end{aligned}
		\]
		We write $\nu(\{S\})$ instead of $\nu(S)$ to emphasize $\nu$ is a distribution on the $k$-subsets of~$T$ as opposed to~$T$ itself.
		
		On the other hand, $P_{\rm sw}^{a,b}(\cdot, \cdot \mid z)$ can be reformulated as a conditional swap chain that is reversible with respect to~$\nu$.
		We will denote by $P_{\rm csw}$ the transition matrix of the conditional swap chain described below:
		With probability $1/2$, this local chain stays put, and otherwise it updates the current state~$S$ as follows: two distinct elements $u,v \in [n]$ are chosen uniformly at random, and if one belongs to~$S$ and the other belongs to $T \setminus S$, the former is swapped out of~$S$ for the latter.
		Thus, a particular transition
		\[
		S\longmapsto (S\setminus\{u\}) \cup\{v\},
		\qquad u\in S,\quad v \in T \setminus S,
		\]
		occurs with probability
		\[
		p=\frac{1}{2\binom n2},
		\]
		where the factor \(1/2\) is the laziness.

		Consider two adjacent local states
		\[
		S=C\cup\{u\},\qquad S'=C\cup\{u'\},
		\]
		where $u \neq u'$ and $u,u' \not\in C$.
		We couple one step of the two chains as follows. With probability \(p\), let
		the chain from \(S\) jump to \(S'\) and let the chain from \(S'\) stay put.
		With probability \(p\), let the chain from \(S'\) jump to \(S\) and let the
		chain from \(S\) stay put. In either case the two chains coalesce. All
		remaining transition mass can be coupled so that the two resulting states remain adjacent or become identical:
		when the chain from $S$ jumps to $C \cup \{v\}$ where $v \not\in C \cup \{u,u'\}$, so does the chain from $S'$;
		when the chain from $S$ jumps to $(C \setminus \{v\}) \cup \{u,u'\}$ where $v \in C$, so does the chain from $S'$;
		when the chain from $S$ jumps to $(C \setminus \{v\}) \cup \{u,w\}$ where $v \in C$ and $w \not\in C \cup \{u,u'\}$, the chain from $S'$ jumps to $(C \setminus \{v\}) \cup \{u',w\}$.
		
		Define the graphical distance
		\[
		d(S,S')=|S\setminus S'|,
		\]
		Then, whenever \(S,S'\) are adjacent, if $(S_1,S_1')$ is generated by one step of the above coupled chain,
		\[
		\mathbb E[d(S_1,S'_1)]
		\le
		1-2p
		=
		1-\binom n2^{-1}.
		\]
		By path coupling \citep{bubley1997path}, the conditional lazy swap chain has spectral gap at least
		\(\binom n2^{-1}\). Therefore,
		\[
		\begin{aligned}
			\mathcal E_{\rm sw}^{a,b}(f,f \mid z) := & \frac{1}{2} \sum_{X, X'} \pi^{a,b}(X \mid z) \, P^{a,b}_{\rm sw}(X, X' \mid z) \, [f(X) - f(X')]^2 \\
			= & \frac{1}{2} \sum_{S,S'} \nu(\{S\}) \, P_{\rm csw}(S, S') \, [g(S) - g(S')]^2  \\
			\ge&
			\binom n2^{-1}\operatorname{Var}_{\nu}(g).
		\end{aligned}
		\]
		Since \(\operatorname{Var}_{\nu}(g)\) is exactly the conditional Dirichlet
		form of the two-row heat-bath update, we get
		\[
		\mathcal E_{\rm sw}^{a,b}(f,f \mid z)
		\ge
		\binom n2^{-1}
		\mathcal E_{H}^{a,b}(f,f \mid z) .
		\]
		Averaging over the conditioned $z$ and over the uniformly chosen row
		pair \(\{a,b\}\), we obtain
		\[
		\mathcal E_{\rm sw}(f,f)
		\ge
		\binom n2^{-1}\mathcal E_H(f,f).
		\]
		Dividing by \(\operatorname{Var}_{\pi}(f)\), where \(\pi\) is the uniform
		measure on matrices with the prescribed row and column sums, and taking the
		infimum over all nonconstant \(f\), gives
		\[
		\gamma_{\rm sw}\ge \binom n2^{-1}\gamma_H .
		\]

	\end{proof}
	
	\section{Reduction to Three Rows}
	
	The following reduction is a special case of the projection argument of
	\citet{caputo2008spectral}. We give a self-contained proof for the row-pair
	heat-bath chain on binary matrices with fixed row and column sums.
	
	Recall \(\Omega=\Omega(r,c)\) is the set of \(m\times n\) binary matrices with
	prescribed row sums \(r\) and column sums \(c\), and let \(\pi\) be the uniform
	measure on \(\Omega\). For a pair of row indices \(\theta=\{a,b\} \subset [m]\) where $a \neq b$, let \(E_{\theta}\) denote
	conditional expectation given all rows outside \(\theta\). 
	In other words, 
	\[
	E_{\theta}f(X) = \sum_{X'} f(X') \, \pi^{a,b}(X' \mid X_{-\{a,b\}}).
	\]
	Put
	\[
	R_{\theta}=\Id-E_{\theta} .
	\]
	Since \(E_{\theta}\) is a conditional expectation, it is an orthogonal projection on $L^2(\pi)$, and
	so \(R_{\theta}\) is also an orthogonal projection.
	
	The row-pair heat-bath transition matrix is
	\[
	P_H
	=
	\binom m2^{-1}
	\sum_{\theta\in\binom{[m]}2}E_{\theta} .
	\]
	Equivalently,
	\[
	P_H
	=
	\Id-\binom m2^{-1}Q_H,
	\qquad
	Q_H=\sum_{\theta\in\binom{[m]}2}R_{\theta} .
	\]
	Moreover, the row-pair heat-bath chain is irreducible because its transition contains the usual \(2\times2\) swap chain, which is connected on \(\Omega(r,c)\) by Proposition \ref{prop:connectivity}; it is aperiodic because each heat-bath update has positive probability of returning the current state. Therefore, to prove
	\[
	\gamma_H\ge \binom m2^{-1},
	\]
	it is enough to show that every positive eigenvalue of \(Q_H\) is at least
	\(1\).
	
	\begin{lemma}[Three-row reduction]
		\label{lem:three-row-reduction}
		Assume that for every triple of rows \(T\subseteq[m]\), and every
		positive-probability fixing of the rows outside \(T\), the row-pair heat-bath
		chain on the remaining three rows has spectral gap at least \(1/3\). Then
		\[
		\gamma_H\ge \binom m2^{-1}.
		\]
	\end{lemma}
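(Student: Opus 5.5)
The plan is to follow the ``squaring'' argument of \citet{caputo2008spectral}. By the discussion preceding the lemma, it suffices to show that every positive eigenvalue of \(Q_H=\sum_{\theta}R_\theta\) is at least \(1\). So fix \(f\neq0\) with \(Q_Hf=\lambda f\) and \(\lambda>0\); in particular \(f\perp\1\). The idea is to compare \(\|Q_Hf\|^2=\lambda^2\|f\|^2\) with \(\langle f,Q_Hf\rangle=\lambda\|f\|^2\), and to get the needed inequality by summing local three-row inequalities.

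\textbf{Step 1: translate the hypothesis.} For a triple \(T\subseteq[m]\), set \(Q_T=\sum_{\theta\subset T}R_\theta\). Conditionally on the rows outside \(T\), the three-row heat-bath chain is \(\Id-\tfrac13 Q_T\). Its gap being at least \(1/3\) means that every nonzero eigenvalue of \(Q_T\), restricted to each fibre, is at least \(1\). Since \(Q_T\) is a positive semidefinite sum of projections acting fibrewise, we get the operator inequality
\[
Q_T^2\ge Q_T \quad\text{on }L^2(\pi).
\]

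\textbf{Step 2: expand and regroup.} Expanding \(Q_H^2\) gives three kinds of terms:
\[
Q_H^2=\sum_\theta R_\theta+\sum_{|\theta\cap\theta'|=1}R_\theta R_{\theta'}+\sum_{\theta\cap\theta'=\varnothing}R_\theta R_{\theta'}.
\]
Each ordered pair of distinct pairs sharing one row lies in exactly one triple, and each pair \(\theta\) lies in \(m-2\) triples. Hence
\[
\sum_T Q_T^2=(m-2)\sum_\theta R_\theta+\sum_{|\theta\cap\theta'|=1}R_\theta R_{\theta'} ,
\]
and therefore
\[
Q_H^2=\sum_T Q_T^2-(m-3)Q_H+D,\qquad D=\sum_{\theta\cap\theta'=\varnothing}R_\theta R_{\theta'}.
\]
Similarly \(\sum_T Q_T=(m-2)Q_H\). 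Taking the inner product with \(f\) and using Step 1 yields
\[
\lambda^2\|f\|^2\ \ge\ (m-2)\lambda\|f\|^2-(m-3)\lambda\|f\|^2+\langle f,Df\rangle=\lambda\|f\|^2+\langle f,Df\rangle .
\]
So if \(\langle f,Df\rangle\ge0\), then \(\lambda\ge1\), which is the claim.

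\textbf{Step 3 (main obstacle): the disjoint-pair term.} We need to show
\[
\sum_{\theta\cap\theta'=\varnothing}\langle R_\theta f,R_{\theta'}f\rangle\ge0 .
\]
If \(E_\theta\) and \(E_{\theta'}\) commuted for disjoint \(\theta,\theta'\), each summand would be \(\|R_\theta R_{\theta'}f\|^2\ge0\). Here, however, they need not commute, because the column-sum constraints couple all the rows. I would first try to show the needed weaker statement that, for disjoint \(\theta,\theta'\), the product \(E_\theta E_{\theta'}\) is self-adjoint on the space generated by conditioning on rows outside \(\theta\cup\theta'\). The check would go fibrewise: fix the rows outside \(\theta\cup\theta'\), so that the four rows of \(\theta\cup\theta'\) are uniform over a fixed-margin four-row fibre. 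Then verify that the two-pair Gibbs updates satisfy the commutation \(E_\theta E_{\theta'}=E_{\theta'}E_\theta\) there. If that fails, I would fall back on Caputo's device. Applying \(E_\theta\) to the eigen-equation gives
\[
\sum_{\theta'}E_\theta R_{\theta'}f=\lambda E_\theta f ,
\]
and one can then rewrite \(\lambda\sum_\theta\|R_\theta f\|^2=\sum_{\theta,\theta'}\langle R_\theta f,R_{\theta'}f\rangle\). The aim would be to absorb any negative part of the disjoint terms into the slack in Step 1, possibly by running the same squaring argument on four-row blocks and an induction on \(m\), with the three-row hypothesis as the base case. This sign control of the disjoint-pair cross terms is where I expect the real difficulty to lie. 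Once it is done, Steps 1--2 give \(\lambda\ge1\), hence \(\gamma_H\ge\binom m2^{-1}\).
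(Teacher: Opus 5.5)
Your Steps 1 and 2 are the paper's argument in operator form. The paper works with the Gram entries $a_{\theta,\theta'}=\ip{R_\theta f}{R_{\theta'}f}$ for arbitrary $f$ rather than an eigenfunction, but uses the same triple count: diagonal terms $m-2$ times, overlapping ordered pairs once, disjoint pairs never. The genuine gap is Step 3, which you leave open. Your stated reason for doubting it is also wrong: for disjoint $\theta,\theta'$ the operators $E_\theta$ and $E_{\theta'}$ \emph{do} commute, and this is exactly the missing ingredient.

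Here is why. Fix the rows outside $\theta\cup\theta'$, and let $s$ be the column-wise sum vector of the two rows in $\theta$. Because the residual column sums are fixed, $s$ also determines the column-wise sum of the rows in $\theta'$. Hence $s$ is measurable with respect to both $\sigma(X_{-\theta})$ and $\sigma(X_{-\theta'})$. Conditional on $(X_{-(\theta\cup\theta')},s)$, the uniform measure is a product: uniform on the $\theta$-completions with pair-sum $s$, times uniform on the $\theta'$-completions with the complementary pair-sum. By this conditional independence and the tower property,
$E_\theta E_{\theta'}f=E_{\theta'}E_\theta f=\E[f\mid X_{-(\theta\cup\theta')},s]$.
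So $R_\theta R_{\theta'}$ is a product of commuting orthogonal projections, hence itself an orthogonal projection. This gives $\ip{R_\theta f}{R_{\theta'}f}=\norm{R_\theta R_{\theta'}f}_2^2\ge0$, hence $\ip{f}{Df}\ge0$, which closes your Step 2.

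The column constraints do couple all rows, but for two disjoint pairs the coupling passes only through $s$, which both conditionings see. (Note also that self-adjointness of $E_\theta E_{\theta'}$ is equivalent to commutation, not a weaker statement.)

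Your fallback would not have rescued the argument. Step 1 yields only $Q_T^2\ge Q_T$, with no quantified slack into which a negative disjoint-pair contribution could be absorbed. Running the squaring argument on four-row blocks reproduces the same disjoint-pair cross terms inside each block, so an induction on $m$ does not avoid the issue. As written, the proposal is incomplete at precisely the one step that needs a short structural argument.
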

	
	\begin{proof}
		For a test function \(f\), define
		\[
		a_{\theta,\theta'}=\ip{R_\theta f}{R_{\theta'}f},
		\qquad
		\mathcal A=\sum_\theta a_{\theta,\theta}
		=
		\sum_\theta \norm{R_\theta f}_2^2 .
		\]
		where $\langle f, g \rangle = \sum_X \pi(X) \, f(X) \, g(X)$, and $\|f\|_2 = \sqrt{\langle f, f \rangle}$.
		Since \(R_\theta\) is an orthogonal projection,
		\[
		\ip{f}{R_{\theta}f}
		=
		\ip{R_{\theta}f}{R_{\theta}f}
		=
		\norm{R_{\theta}f}_2^2 .
		\]
		Therefore
		\[
		\ip{f}{Q_H f}=\mathcal A,
		\qquad
		\norm{Q_H f}_2^2=\sum_{\theta,\theta'}a_{\theta,\theta'} .
		\]

		We first  claim the following commutation fact. If
		\(\theta\cap \theta'=\varnothing\), then
		\[
		E_{\theta}E_{\theta'}=E_{\theta'}E_{\theta} .
		\]
		Indeed, condition on all rows outside \(\theta\cup \theta'\). During the \(\theta\)-update,
		the two rows in \(\theta'\) are held fixed, so the column-wise sum of the two rows
		in \(\theta\) is fixed. Similarly, during the \(\theta'\)-update, the column-wise sum of
		the two rows in \(\theta'\) is fixed. Once these two column-wise pair sums are fixed,
		the choices for the two row pairs are independent uniform choices, and averaging
		over one pair then the other is independent of the order. Hence the two
		conditional expectations commute.
		
		Thus \(R_{\theta}=\Id-E_{\theta}\) and \(R_{\theta'}=\Id-E_{\theta'}\) are commuting orthogonal
		projections. Hence \(R_\theta R_{\theta'}\) is positive semidefinite, and
		\[
		a_{\theta,\theta'}
		=
		\ip{R_{\theta}f}{R_{\theta'}f}
		=
		\ip{f}{R_{\theta}R_{\theta'}f}
		\ge 0
		\]
		whenever \(\theta\cap \theta'=\varnothing\).
		
		Now fix a triple of row indices \(T\subseteq[m]\), and define
		\[
		Q_H^{(T)}=\sum_{\substack{\theta\subseteq T\\ |\theta|=2}}R_{\theta} .
		\]
		After conditioning on $X_{-T}$, the rows outside \(T\), the three-row heat-bath transition
		matrix is
		\[
		P_T
		=
		\frac13\sum_{\substack{\theta \subseteq T\\ |\theta|=2}}E_{\theta}
		=
		\Id-\frac13 Q_H^{(T)} .
		\]
		By assumption, this conditional chain has spectral gap at least \(1/3\). Hence
		every positive eigenvalue of \(Q_H^{(T)}\) is at least \(1\). 
		Then, if $X$ is distributed as~$\pi$,
		\[
		\mathbb{E}\{[Q_H^{(T)} f(X)]^2 \mid X_{-T}\} \geq \mathbb{E}[f(X) \, Q_H^{(T)} f(X) \mid X_{-T}].
		\]
		Taking the average yields
		\[
		\sum_{\substack{\theta,\theta'\subseteq T\\ |\theta|=|\theta'|=2}}a_{\theta,\theta'}
		\ge
		\sum_{\substack{\theta\subseteq T\\ |\theta|=2}}a_{\theta,\theta}.
		\]
		
		Sum this inequality over all triples \(T\subseteq[m]\). Each diagonal term
		\(a_{\theta,\theta}\) is counted in exactly \(m-2\) triples. Each ordered off-diagonal
		term \(a_{\theta,\theta'}\) with \(\theta\cap \theta'\ne\varnothing\) is counted exactly once,
		namely in the triple \(\theta\cup \theta'\). Disjoint pair-pairs are not counted. Thus
		\[
		(m-2) \mathcal A
		+
		\sum_{\substack{\theta\ne \theta'\\ \theta\cap \theta'\ne\varnothing}}a_{\theta,\theta'}
		\ge
		(m-2) \mathcal A .
		\]
		Therefore
		\[
		\sum_{\substack{\theta\ne \theta'\\ \theta\cap \theta'\ne\varnothing}}a_{\theta,\theta'}\ge0 .
		\]
		Together with the nonnegativity of the disjoint-pair terms, this gives
		\[
		\sum_{\theta,\theta'}a_{\theta,\theta'}
		\ge
		\sum_{\theta} a_{\theta,\theta}
		=
		\mathcal A.
		\]
		Equivalently,
		\[
		\norm{Q_H f}_2^2\ge \ip{f}{Q_H f}.
		\]
		
		Since \(Q_H \) is self-adjoint and nonnegative, diagonalizing \(Q_H \) shows that the
		last inequality for all \(f\) is equivalent to saying that every positive
		eigenvalue of \(Q_H \) is at least \(1\). Finally,
		\[
		P_H=\Id-\binom m2^{-1}Q_H ,
		\]
		so the row-pair heat-bath chain has spectral gap at least
		\[
		\binom m2^{-1}.
		\]
	\end{proof}
	
	Thus, to prove the row-pair heat-bath bound for all \(m\ge3\), it remains to
	show that every row-pair heat-bath chain on \(3\times n\) binary matrices with
	arbitrary feasible margins has spectral gap at least \(1/3\).

	\section{Analysis of the Three-row Model}
	
	\subsection{Overview}
	
	In this section, we study the three-row heat-bath chain.
	Since the analysis is quite involved, it helps to introduce a new set of notations.

	Consider a conditional three-row state space obtained after fixing all rows
	outside a chosen triple. Columns whose residual column sum is \(0\) or \(3\)
	are forced and may be deleted. Let \(P\) be the set of remaining columns with
	residual column sum \(1\), and let \(Q\) be the set of remaining columns with
	residual column sum \(2\).  Put \(p=|P|\) and \(q=|Q|\).
	
	A state is encoded by two ordered partitions
	\[
	A_1\sqcup A_2\sqcup A_3=P,
	\qquad
	B_1\sqcup B_2\sqcup B_3=Q,
	\]
	where \(A_i\) is the set of sum-one columns occupied by row \(i\), and \(B_i\) is the set of sum-two columns missed by row \(i\).  The row-sum constraints are equivalent, after absorbing forced columns into constants, to
	\[
	|A_i|-|B_i|=d_i,
	\qquad i=1,2,3,
	\]
	where \(d_1+d_2+d_3=p-q\).  The stationary measure of the three-row heat-bath is uniform over all labeled states satisfying these constraints.
	
	In view of the stationary measure, let
	\[
	\mathcal F_i=\sigma(A_i,B_i),
	\qquad
	P_i=\E[\,\cdot\mid\mathcal F_i],
	\qquad
	K_3=\frac13(P_1+P_2+P_3).
	\]
	Updating the pair of rows different from \(i\) is the same as conditioning on \(\mathcal F_i\), so the pair heat-bath chain on three rows is exactly \(K_3\).
	
	\begin{theorem}[Three-row projection inequality]\label{thm:three-row}
		For every feasible conditional three-row state space,
		\[
		\operatorname{gap}(K_3)\ge\frac13.
		\]
		Equivalently, for every mean-zero function \(f\),
		\[
		\sum_{i=1}^3 \Var\left(\E[f\mid\mathcal F_i]\right)
		\le 2\Var(f).
		\]
	\end{theorem}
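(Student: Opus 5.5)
The plan is to use the symmetry of the conditional three-row state space under $G=S_P\times S_Q$, which permutes the sum-one columns and the sum-two columns separately. The group $G$ acts on states $(A_1,A_2,A_3;B_1,B_2,B_3)$ and preserves the uniform measure. It commutes with each $P_i$, because the $\sigma$-algebra $\mathcal F_i$ is $G$-invariant. Hence $L^2_0$ splits into $G$-isotypic components, each invariant under $K_3$, and it suffices to prove $\sum_i\norm{P_if}_2^2\le 2\norm{f}_2^2$ separately on each component. I would organize the components into two classes.
\begin{itemize}
\item The \emph{count sector} is the $G$-invariant component. It consists of functions of the size vector $(|A_1|,|A_2|,|A_3|)$; the $|B_i|$ are then determined by $|A_i|-|B_i|=d_i$.
\item The \emph{harmonic sectors} are all nontrivial isotypic components, indexed by pairs of partitions $(\lambda,\mu)$ of $(p,q)$ with $(\lambda,\mu)\ne((p),(q))$.
\end{itemize}

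On the count sector, $K_3$ restricts to an explicit chain on the lattice of feasible size vectors $a=(a_1,a_2,a_3)$. A $P_i$-step freezes $a_i$ and resamples the split of the remaining $p-a_i$ sum-one columns and $q-b_i$ sum-two columns between rows $j,k$. The constraint $a_j-b_j=d_j$ leaves a single free integer parameter, whose law is a product of binomial coefficients (a hypergeometric-type law). So the count chain is a random-scan heat bath on a two-dimensional lattice polygon.

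I would prove its gap is at least $1/3$ by path coupling, as in Lemma~\ref{lem:two-column}, with the $\ell^1$-type distance on $a$. The coupling proceeds as follows.
\begin{enumerate}
\item Take two adjacent size vectors, differing by moving one unit between coordinates $j$ and $k$.
\item If the chosen row pair is $\{j,k\}$, which happens with probability $1/3$, both chains see the same frozen coordinate and resample identically, so they coalesce.
\item For the other two row pairs, couple the two one-dimensional log-concave hypergeometric laws monotonically. The shifted conditioning changes the law by a monotone likelihood ratio, so the expected distance stays at most $1$.
\end{enumerate}
This gives contraction $1-1/3$. If the monotone coupling only keeps the distance $\le 1$ in expectation after a short sandwich argument, that is acceptable. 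It requires checking log-concavity of $x\mapsto\binom{p-a_i}{x}\binom{q-b_i}{x-d_j}$, which I expect to be routine.

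On a harmonic sector $V_{\lambda,\mu}$, the image $P_iV_{\lambda,\mu}$ consists of functions of $(A_i,B_i)$. For fixed sizes $(a_i,b_i)$ these lie in the Johnson-scheme spaces of $J(p,a_i)\otimes J(q,b_i)$, so only $\lambda,\mu$ with at most two rows contribute; all other sectors are killed by every $P_i$. Inside such a sector, write $f$ through its restrictions to each size class. Each $P_if$ is determined by a spherical (Hahn-polynomial) coefficient function on size classes.

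The inequality $\norm{P_1f+P_2f+P_3f}\le 2\norm f$-type bound reduces to controlling the pairwise operators $P_iP_jP_i$ on $\im P_i$: it suffices that the cosines between the ranges $\im P_i\cap V_{\lambda,\mu}$ are at most $1/2$. Concretely, I will bound the Gram matrix of $(P_1f,P_2f,P_3f)$ by $\|P_iP_j\|\le 1/2$, which gives top eigenvalue $\le 1+2\cdot\tfrac12=2$. The quantity $P_iP_j$ restricted to a harmonic $(\lambda,\mu)$ with $\lambda=(p-s,s)$, $\mu=(q-t,t)$, $s+t\ge1$, is computed by the exact Johnson eigenvalue identity: conditioning a degree-$s$ harmonic of $A_j$ on $A_i$ (disjoint from $A_j$) multiplies it by a ratio such as $(-1)^s\binom{\cdot}{s}/\binom{\cdot}{s}$. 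This yields $|\text{cosine}|\le \frac{a_ia_j}{(p-a_i)(p-a_j)}$-type factors. I would then average over size classes by Cauchy--Schwarz.

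The main obstacle is this last step: obtaining the uniform bound $\|P_iP_j\|\le1/2$ on every nontrivial sector, uniformly over all margins $d$, $p$, $q$. The degree-one sectors ($s+t=1$) are the tight case, and there the sum-one and sum-two contributions interact with opposite signs. I would first treat $s+t=1$ by a direct covariance computation, writing $P_if$ as linear statistics $\sum_{u\in A_i}h(u)-\sum_{v\in B_i}g(v)$. Then I would show higher degrees only shrink the Johnson ratios.
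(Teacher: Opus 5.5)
Your count-sector argument is essentially the paper's. Freezing the shared coordinate coalesces the two chains. In the other two cases, the two-sided sandwich coupling of the log-concave one-parameter laws keeps the distance exactly $1$, which gives contraction $2/3$.

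\textbf{The gap is in the harmonic sectors.} The criterion you plan to prove, $\|P_iP_j\|\le 1/2$ on every nontrivial sector, is false. So the Schur-type bound $1+2\max_{i\ne j}\|P_iP_j\|$ cannot give $2$.

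Take $q=0$. A state is then an ordered partition of $P$ into blocks of fixed sizes $a_1,a_2,a_3$. Look at the degree-one sector: for $h$ on $P$ with $\sum_u h(u)=0$, put $f_i=\sum_{u\in A_i}h(u)$. Lemma~\ref{lem:disjoint-average} gives $P_if_j=-\frac{a_j}{p-a_i}f_i$, and $\|f_i\|_2^2$ is proportional to $a_i(p-a_i)$. Hence
\[
\cos(f_i,f_j)=-\sqrt{\frac{a_ia_j}{(p-a_i)(p-a_j)}},
\]
which is the square root of the factor you wrote, not the factor itself.

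\begin{enumerate}
\item For $(a_1,a_2,a_3)=(2,2,1)$ the cosine is $-2/3$, so $\|P_1P_2\|\ge 2/3$ on that sector.
\item For $(a_1,a_2,a_3)=(a,p-a,0)$ one has $\mathcal F_1=\mathcal F_2$, so $\|P_1P_2\|=1$.
\end{enumerate}
Yet the true top eigenvalues of $S=P_1+P_2+P_3$ on these sectors are $5/3$ and $2$ respectively, so the theorem holds there. No refinement of the pairwise estimates can rescue the plan, whether by higher degrees shrinking the Johnson ratios or by Cauchy--Schwarz over size classes. The pairwise angles themselves are genuinely large.

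\textbf{What is missing is a joint bound} on the transfer from row $i$ into both other rows at once. In the example,
\[
\frac{a_j}{p-a_i}+\frac{a_k}{p-a_i}=1,
\]
so a large overlap with row $j$ forces a small overlap with row $k$.

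The paper exploits exactly this. On each sector $R_{\ell,m}$ it proves the intertwining
\[
S\,T_{\ell,m}=T_{\ell,m}\bigl((I+(-1)^{\ell+m}C)\otimes\Id\bigr)
\]
in the non-orthonormal lift basis $F^{h,g}_{i,s}$. It then bounds each row sum of $C$, taken over both $j\ne i$ and all $t$, by $1$, using
\[
(x)_\ell(u)_m+(y)_\ell(v)_m\le(x+y)_\ell(u+v)_m .
\]
This gives $\rho(C)\le1$. The quotient lemma, together with self-adjointness of $S$, then shows that every eigenvalue of $S$ on $R_{\ell,m}$ is at most $2$.

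This row-sum bound in the natural lift basis is effectively a weighted Schur test across all three rows jointly. That is what makes the estimate sharp where pairwise operator norms fail, and the sign $(-1)^{\ell+m}$ then plays no role.
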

	
	The next subsections prove Theorem~\ref{thm:three-row}.
	
	\subsection{The scalar count sector}
	
	First consider test functions depending only on the row-count vector.  We use the vector
	\[
	(|B_1|,|B_2|,|B_3|),
	\]
	which is equivalent to using the corresponding \(A\)-counts.
	
	\begin{lemma}[One-dimensional sandwich coupling]\label{lem:sandwich}
		Let \(u,v\) be positive log-concave weights on \(\mathbb Z_{\ge0}\), and define
		\[
		\mu_s(t)=\frac{u(t)v(s-t)}{\sum_a u(a)v(s-a)}
		\]
		on its natural support.  If \(T_s\sim\mu_s\) and \(T_{s+1}\sim\mu_{s+1}\), then there is a coupling such that
		\[
		T_{s+1}\in\{T_s,T_s+1\}
		\]
		almost surely.
	\end{lemma}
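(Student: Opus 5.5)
We prove the lemma by establishing two stochastic dominations: \(\mu_s\preceq\mu_{s+1}\), and \(\mu_{s+1}\preceq\mu_s\) shifted by one, i.e.\ \(\mu_{s+1}(T\ge t)\le\mu_s(T\ge t-1)\) for all \(t\). Given both, the quantile coupling \(T_s=F_s^{-1}(U)\), \(T_{s+1}=F_{s+1}^{-1}(U)\) with a common uniform \(U\) does the job. The first domination gives \(T_{s+1}\ge T_s\). The second gives \(T_{s+1}\le T_s+1\), because \(T_s+1\) has distribution function \(F_s(\cdot-1)\le F_{s+1}(\cdot)\). Since both variables are integer-valued, together these force \(T_{s+1}\in\{T_s,T_s+1\}\) almost surely.

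Both dominations will follow from monotone likelihood ratios, which is where log-concavity enters. The ratio
\[
\frac{\mu_{s+1}(t)}{\mu_s(t)}\propto \frac{v(s+1-t)}{v(s-t)}
\]
is nondecreasing in \(t\) because \(v\) is log-concave, so \(a\mapsto v(a+1)/v(a)\) is nonincreasing and \(a=s-t\) decreases as \(t\) grows. Hence \(\mu_{s+1}\) dominates \(\mu_s\) in the likelihood-ratio order, and therefore stochastically.

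For the second domination we compare \(\mu_{s+1}(t)\) with \(\mu_s(t-1)\). The ratio
\[
\frac{\mu_{s+1}(t)}{\mu_s(t-1)}\propto \frac{u(t)}{u(t-1)}
\]
is nonincreasing in \(t\) by log-concavity of \(u\). So the law of \(T_{s+1}\) is likelihood-ratio dominated by the law of \(T_s+1\), which gives \(T_{s+1}\preceq_{\rm st}T_s+1\).

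The main obstacle is the support. The measures live on integer intervals, since log-concave positive weights on \(\mathbb Z_{\ge0}\) are taken with interval support, and the ratio arguments need care where one density vanishes and the other does not. The support of \(\mu_s\) is \(\{t:u(t)>0,\ v(s-t)>0\}\), an interval \([\ell_s,h_s]\). We check that \(\ell_s\le\ell_{s+1}\le\ell_s+1\) and \(h_s\le h_{s+1}\le h_s+1\). With these endpoint relations, monotone likelihood ratio on the common support, extended by \(0\) and \(\infty\) conventions at the ends, still implies the stochastic orders.

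To keep this rigorous we verify the inequalities on tail sums directly. For the first order, the claim \(\sum_{t\ge x}\mu_{s+1}(t)\ge\sum_{t\ge x}\mu_s(t)\) follows from the standard lemma that a nondecreasing ratio with interval supports ordered as above yields domination. The second order is handled the same way after the shift \(t\mapsto t-1\).
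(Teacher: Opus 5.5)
Your proof is correct and follows the paper's argument. The monotone likelihood ratio in $v$ gives $T_s\preceq_{\rm st}T_{s+1}$, the shifted ratio in $u$ (the paper's ``swap the two coordinates'' step) gives $T_{s+1}\preceq_{\rm st}T_s+1$, and the common-$U$ quantile coupling realizes both. Your support discussion is harmless but unnecessary: the weights are strictly positive, so $\mu_s$ lives on $\{0,\dots,s\}$, and the cross-multiplied likelihood-ratio inequalities already handle the endpoints.
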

	
	\begin{proof}
		The likelihood ratio
		\[
		\frac{\mu_{s+1}(t)}{\mu_s(t)}\propto
		\frac{v(s+1-t)}{v(s-t)}
		\]
		is increasing in \(t\), because \(v(k+1)/v(k)\) is decreasing in \(k\).  Hence \(\mu_s\preceq_{\rm st}\mu_{s+1}\).  Applying the same argument after swapping the two coordinates shows that \(s+1-T_{s+1}\) stochastically dominates \(s-T_s\), equivalently
		\[
		T_{s+1}\preceq_{\rm st} T_s+1.
		\]
		Thus
		\[
		T_s\preceq_{\rm st} T_{s+1}\preceq_{\rm st}T_s+1.
		\]
		For integer laws, the quantile coupling realizes both inequalities at the same time: if \(U\) is uniform on \([0,1]\) and \(F_\nu^{-1}(U)=\min\{t:F_\nu(t)\ge U\}\), set
		\[
		T_s=F_{\mu_s}^{-1}(U),
		\qquad
		T_{s+1}=F_{\mu_{s+1}}^{-1}(U).
		\]
		The two stochastic inequalities imply \(T_s\le T_{s+1}\le T_s+1\) for every \(U\).  Since the variables are integer-valued, this is the desired coupling.
	\end{proof}
	
	\begin{lemma}[Scalar Bessel inequality]\label{lem:bessel}
		Let
		\[
		\Lambda_N=\{x=(x_1,x_2,x_3)\in\mathbb Z_{\ge0}^3:x_1+x_2+x_3=N\},
		\]
		and let
		\[
		\pi(x)=\frac1Z\prod_{i=1}^3\frac1{x_i!(x_i+\delta_i)!},
		\qquad \delta_i\in\mathbb Z_{\ge0}.
		\]
		Let \(Q_if=\E_\pi[f\mid X_i]\), and let \(K=(Q_1+Q_2+Q_3)/3\).  Then
		\[
		\operatorname{gap}(K)\ge\frac13.
		\]
	\end{lemma}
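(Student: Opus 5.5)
Since $Q_1,Q_2,Q_3$ are orthogonal projections on $L^2(\pi)$, the claim $\operatorname{gap}(K)\ge 1/3$ is equivalent to
\[
\sum_{i=1}^3 \Var\bigl(\E[f\mid X_i]\bigr)\le 2\Var(f)
\]
for mean-zero $f$. Equivalently, the operator $Q_1+Q_2+Q_3$ has no eigenvalue larger than $2$ on mean-zero functions. I plan to obtain this by a contraction estimate driven by the sandwich coupling of Lemma~\ref{lem:sandwich}.

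The first step is to identify the structure of the one-dimensional conditionals. Given $X_i=t$, the remaining pair $(X_j,X_k)$ has sum $N-t$, and its law is exactly $\mu_{N-t}$ built from $u(a)=1/(a!(a+\delta_j)!)$ and $v(b)=1/(b!(b+\delta_k)!)$. These weights are log-concave, since $u(a+1)/u(a)=1/((a+1)(a+1+\delta_j))$ is decreasing. Lemma~\ref{lem:sandwich} therefore couples the conditional laws for neighboring values of $X_i$ so that the other coordinates move by at most one unit, in a monotone way.

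The second step builds a coupling of one step of $K$ started from adjacent states $x$ and $x'=x+e_a-e_b$. Under the graph metric $d(x,y)=\tfrac12\|x-y\|_1$ on $\Lambda_N$, I want to show $\E d(X_1,X_1')\le 1-1/3$.

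1. With probability $1/3$ the chain resamples given the coordinate $c\notin\{a,b\}$. Then $x_c=x'_c$, the conditional laws coincide, and the two copies coalesce.
2. With probability $1/3$ it conditions on $X_a$, whose values in the two copies differ by one. The sandwich coupling makes the resampled pairs differ by a single unit, so the distance stays $1$.
3. Conditioning on $X_b$ is symmetric to case 2.

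This gives $\E d\le 2/3$. Path coupling \citep{bubley1997path} then yields a contraction of $K$ in the $W_1$ distance, with rate $1/3$. For reversible chains, $W_1$ contraction at rate $\kappa$ implies $\operatorname{gap}\ge\kappa$: apply it to the eigenfunction of $\lambda_2$, which is Lipschitz on the finite space. Hence $\operatorname{gap}(K)\ge 1/3$.

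The main obstacle is case 2. After conditioning on $X_a$ with values $x_a$ and $x_a+1$, the coordinates $(b,c)$ have sums $s=N-x_a$ and $s-1$. Lemma~\ref{lem:sandwich} gives either $X_b$ equal and $X_c$ off by one, or $X_c$ equal and $X_b$ off by one. In both outcomes the new states differ by $e_a-e_c$ or $e_a-e_b$, hence are at distance exactly $1$, so that case is fine. What needs care is checking that the natural supports match, so that no boundary state is excluded, and that the path-coupling metric is the graph distance of moves $e_a-e_b$ along geodesics in $\Lambda_N$. Both hold because $\Lambda_N$ is a discrete simplex, with all positive weights on $\mathbb Z_{\ge0}$.
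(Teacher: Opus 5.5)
Your proposal is correct and follows essentially the same route as the paper's proof: log-concavity of $k\mapsto 1/(k!(k+\delta_i)!)$, coalescence when the common coordinate is frozen, and the sandwich coupling of Lemma~\ref{lem:sandwich} keeping the states adjacent when either differing coordinate is frozen. Path coupling then gives $W_1(K(x,\cdot),K(y,\cdot))\le \tfrac23 d(x,y)$, and your Lipschitz-eigenfunction step is exactly the paper's Kantorovich-duality bound $\Lip(Kf)\le\tfrac23\Lip(f)$, which yields $|\lambda|\le 2/3$ for every nonconstant eigenvalue.
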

	
	\begin{proof}
		Put
		\[
		w_i(k)=\frac1{k!(k+\delta_i)!}.
		\]
		The ratios
		\[
		\frac{w_i(k+1)}{w_i(k)}=\frac1{(k+1)(k+\delta_i+1)}
		\]
		are decreasing in \(k\), so each \(w_i\) is log-concave.
		
		Put on \(\Lambda_N\) the graph metric
		\[
		d(x,y)=\frac12\sum_{i=1}^3 |x_i-y_i|.
		\]
		Adjacent states have the form \(y=x-e_a+e_b\).  Couple one step of \(K\) from \(x\) and from \(y\) using the same frozen coordinate.
		
		If coordinate \(c\), the third coordinate different from \(a,b\), is frozen, then \(x_c=y_c\), so the conditional laws of the other two coordinates are identical; couple them identically, and the chains coalesce.  If coordinate \(a\) is frozen, then the two remaining totals differ by one.  Lemma~\ref{lem:sandwich} couples the two conditional resamplings so that the full updated configurations remain adjacent.  The same argument applies when coordinate \(b\) is frozen.  Therefore, given adjacent \(x,y\), if $(X',Y')$ are generated via one step of the coupled chain,
		\[
		\E[d(X',Y')]
		\le \frac13\cdot0+\frac13\cdot1+\frac13\cdot1
		=\frac23.
		\]
		By path coupling \citep{bubley1997path},
		\[
		W_1(K(x,\cdot),K(y,\cdot))\le\frac23 d(x,y)
		\]
		for all \(x,y\).  Kantorovich duality then gives contraction of the Lipschitz seminorm:
		\[
		\Lip(Kf)\le\frac23\Lip(f).
		\]
		If \(Kf=\lambda f\) and \(f\) is nonconstant, then \(\Lip(f)>0\), so
		\[
		|\lambda|\Lip(f)=\Lip(Kf)\le\frac23\Lip(f).
		\]
		Thus every nonconstant eigenvalue has absolute value at most \(2/3\), and in particular \(\operatorname{gap}(K)\ge1/3\).
	\end{proof}
	
	\begin{lemma}[Identification of the count chain]\label{lem:count-id}
		The restriction of \(K_3\) to count-dependent functions is of the form in Lemma~\ref{lem:bessel}.
	\end{lemma}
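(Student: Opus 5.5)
The plan is to compute the law of the count vector under the uniform measure on the three-row state space, and then to check that each projection \(P_i\) acts on count-dependent functions as the conditional expectation \(Q_i\) of Lemma~\ref{lem:bessel}.

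\emph{Step 1 (law of the counts).} Fix \(b=(b_1,b_2,b_3)\in\mathbb Z_{\ge0}^3\) with \(b_1+b_2+b_3=q\), and set \(a_i=b_i+d_i\). The number of labeled states with \(|B_i|=b_i\) for all \(i\) is the product of two multinomial coefficients:
\[
\frac{p!}{a_1!a_2!a_3!}\cdot\frac{q!}{b_1!b_2!b_3!}.
\]
The \(A\)-part and the \(B\)-part are independent ordered partitions of \(P\) and of \(Q\). Hence the count law is proportional to \(\prod_i 1/\bigl(b_i!(b_i+d_i)!\bigr)\). The \(d_i\) may be negative, so I reparametrize by
\[
x_i=\min(a_i,b_i)=b_i+\min(0,d_i),\qquad \delta_i=|d_i|.
\]
Then \(\{b_i,a_i\}=\{x_i,x_i+\delta_i\}\), so the weight is \(\prod_i 1/\bigl(x_i!(x_i+\delta_i)!\bigr)\). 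The sum \(x_1+x_2+x_3=q+\sum_i\min(0,d_i)=:N\) is constant. The nonnegativity constraints \(a_i,b_i\ge0\) become exactly \(x_i\ge0\). Thus the push-forward of the uniform measure is the measure \(\pi\) on \(\Lambda_N\) from Lemma~\ref{lem:bessel}. The map from \(b\) to \(x\) is a translation, so count-dependent functions are exactly the functions of \(x\).

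\emph{Step 2 (action of \(P_i\)).} Let \(f=h(x)\). Condition on \(\mathcal F_i\), i.e.\ on \((A_i,B_i)\). The remaining configuration is uniform over splittings of \(P\setminus A_i\) into \(A_j,A_k\) and of \(Q\setminus B_i\) into \(B_j,B_k\), subject to \(|A_l|-|B_l|=d_l\). For fixed \((b_j,b_k)\), the number of such splittings is
\[
\binom{p-a_i}{a_j}\binom{q-b_i}{b_j},
\]
which depends on \((A_i,B_i)\) only through \(b_i\). It is proportional, as a function of \((b_j,b_k)\), to \(\prod_{l\ne i}1/\bigl(b_l!(b_l+d_l)!\bigr)\). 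This is exactly \(\pi(\cdot\mid x_i)\). Hence \(P_i f=(Q_i h)(x)\), so \(P_i\) preserves the subspace of count-dependent functions and acts there as \(Q_i\). Therefore \(K_3\) restricted to that subspace is unitarily the operator \(K\) of Lemma~\ref{lem:bessel}, since \(L^2\) norms agree by Step 1.

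I expect no serious obstacle. The only delicate point is the sign issue in Step 1: one must choose \(x_i\) as the smaller of \(a_i,b_i\) so that all \(\delta_i\ge0\) while the total \(\sum_i x_i\) stays constant. One should also check that the support of \(\pi\) matches the feasible counts exactly, which holds because every \(x\in\Lambda_N\) gives \(a_i,b_i\ge0\) with \(\sum_i a_i=p\) and \(\sum_i b_i=q\).
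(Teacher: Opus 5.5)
Your proposal is correct and follows the paper's proof essentially step for step: the same multinomial count of labeled states, the same reparametrization \(x_i=\min(a_i,b_i)\) with \(\delta_i=|d_i|\), and the same identification of \(P_i\) with \(Q_i\) on count-dependent functions. Your Step 2 computes the binomial count of completions explicitly, which spells out the exchangeability argument the paper states in one line.
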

	
	\begin{proof}
		Let
		\[
		b_i=|B_i|,
		\qquad
		a_i=|A_i|=b_i+d_i.
		\]
		The number of labeled states with these counts is
		\[
		\frac{p!}{a_1!a_2!a_3!}\frac{q!}{b_1!b_2!b_3!},
		\]
		which is proportional, as a function of the counts, to
		\[
		\prod_{i=1}^3\frac1{a_i!b_i!}.
		\]
		If \(d_i\ge0\), set \(x_i=b_i\).  If \(d_i<0\), set \(x_i=a_i=b_i+d_i\).  Then
		\[
		a_i!b_i!=x_i!(x_i+|d_i|)!,
		\]
		and \(x_1+x_2+x_3\) is fixed.  Hence the count law is precisely the Bessel law of Lemma~\ref{lem:bessel}, with \(\delta_i=|d_i|\).  The apparent upper bounds \(a_i\le p\) and \(b_i\le q\) are automatic from nonnegativity and the fixed sums \(\sum_i a_i=p\), \(\sum_i b_i=q\).
		
		For a count-dependent function, conditioning on \(\mathcal F_i=\sigma(A_i,B_i)\) is the same as conditioning on \(|B_i|\), since labels are exchangeable within fixed counts.  Therefore \(K_3\) on count-dependent functions is exactly the chain in Lemma~\ref{lem:bessel}.
	\end{proof}
	
	\subsection{Johnson harmonics and non-scalar sectors}
	
	\subsubsection{Johnson harmonic preliminaries}
	
	Let \(E\) be a finite set with \(|E|=N\).  For \(\ell\ge0\), let \(H_E^\ell\) be the space of functions \(h\) on \(\ell\)-subsets of \(E\) whose lower shadows vanish:
	\[
	\sum_{L\supset T,\ |L|=\ell} h(L)=0
	\quad\text{for every }T\subset E,\ |T|<\ell.
	\]
	For \(a\ge\ell\), define the lift
	\[
	h^{(a)}(A)=\sum_{L\subset A,\ |L|=\ell}h(L),
	\qquad |A|=a,
	\]
	and set \(h^{(a)}=0\) if \(a<\ell\).  For \(\ell=0\), \(H_E^0\) is the space of constants and \(h^{(a)}\) is constant.
	
	This decomposition can be viewed either as the eigenspace decomposition of the
	Johnson association scheme or as the discrete-harmonic decomposition on a
	Boolean slice.  The discrete-harmonic viewpoint goes back at least to
	\citet{delsarte1978hahn}; see also \citet{filmus2016orthogonal} for the
	harmonic multilinear-polynomial representation of functions on a slice.  From the representation-theoretic viewpoint, it is the two-row case of Young's rule for the Young permutation module $M^{(N-a,a)}$; see Theorem~2.11.2 of \citet{sagan2001symmetric}.  We include a self-contained finite-dimensional proof.
	
	\begin{lemma}[Johnson harmonic decomposition]\label{lem:johnson-decomp}
		For each \(a\), the functions on \(a\)-subsets of \(E\) decompose orthogonally as
		\[
		L^2\!\left(\binom{E}{a}\right)
		=\bigoplus_{0\le\ell\le\min(a,N-a)}
		\{h^{(a)}:h\in H_E^\ell\}.
		\]
	\end{lemma}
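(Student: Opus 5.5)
The plan is to use the up and down operators between consecutive slices. For $b<a$ (or more generally), define $U:L^2(\binom{E}{k})\to L^2(\binom{E}{k+1})$ by $Uf(A)=\sum_{x\in A}f(A\setminus\{x\})$ and $D=U^*$ (with respect to counting inner products), $Dg(B)=\sum_{x\notin B}g(B\cup\{x\})$. In this language $H_E^\ell=\ker D$ on $\ell$-sets: the vanishing of all lower shadows is equivalent to vanishing of the immediate shadow onto $(\ell-1)$-sets, since shadows onto smaller sets are multiples of iterated $D$. The lift is $h^{(a)}=\frac{1}{(a-\ell)!}U^{a-\ell}h$.

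\textbf{Step 1 (commutation relation).} First verify the standard identity on $k$-sets:
\[
DU-UD=(N-2k)\Id .
\]
This is a direct count: $DUf(B)$ counts $(x\notin B,\ y\in B\cup\{x\})$. The terms with $y=x$ give $(N-k)f(B)$, and the rest match $UDf(B)$ except for the $k$ diagonal terms of $UD$.

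\textbf{Step 2 (lifts are injective and orthogonal).} By induction, for $h\in\ker D$ on $\ell$-sets,
\[
DU^{j}h=j(N-2\ell-j+1)U^{j-1}h .
\]
Hence
\[
\|U^{j}h\|^2=\langle U^{j-1}h,DU^jh\rangle=j(N-2\ell-j+1)\|U^{j-1}h\|^2 ,
\]
which is positive for $j\le N-2\ell$, i.e.\ for $a=\ell+j\le N-\ell$. So $h\mapsto h^{(a)}$ is injective when $\ell\le\min(a,N-a)$. For orthogonality of the sectors with $\ell<\ell'$, move all $U$'s onto the other side as $D$'s: $\langle U^{a-\ell}h,U^{a-\ell'}h'\rangle$ reduces, by the displayed formula, to a multiple of $\langle U^{\ell'-\ell}h,h'\rangle=\langle h,D^{\ell'-\ell}h'\rangle=0$. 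The paper's measure on $a$-sets is uniform, so orthogonality in counting measure suffices.

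\textbf{Step 3 (spanning).} I would give two arguments, either of which suffices.

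\emph{Dimension count.} $\dim\ker D\ge\binom N\ell-\binom N{\ell-1}$ on $\ell$-sets, for $\ell\le N/2$. Summing over $\ell\le\min(a,N-a)$ telescopes to $\binom N{\min(a,N-a)}=\binom Na$. Combined with injectivity and orthogonality, the direct sum already exhausts the space, so equality holds.

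\emph{Induction.} On $a$-sets with $a\le N/2$, decompose $L^2=\ker D\oplus\operatorname{im}U$, using $D=U^*$. Apply the inductive decomposition to $(a-1)$-sets and push it forward by $U$; the lifts are preserved up to scalars. For $a>N/2$, use complementation $A\mapsto E\setminus A$, or note that $U^{N-2a}$-type arguments give the same dimension count.

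The main obstacle is the bookkeeping in Step 2: obtaining the exact constant $j(N-2\ell-j+1)$ and confirming that it is nonzero precisely in the range $\ell\le N-a$. That is what bounds the index set at $\min(a,N-a)$. I also need to check that the paper's definition of $H_E^\ell$, which imposes vanishing on all lower shadows, coincides with $\ker D$. This holds because the shadow onto $|T|=t$ equals $\frac{1}{(\ell-t)!}D^{\ell-t}h$.
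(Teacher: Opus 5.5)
Your proposal is correct and follows essentially the same route as the paper. Both arguments use the up/down operators with $D=U^*$, the identity $DU-UD=(N-2k)\Id$, the inductive norm formula $\|U^jh\|^2=j(N-2\ell-j+1)\|U^{j-1}h\|^2$ for injectivity, orthogonality by moving the $U$'s across as $D$'s onto a harmonic $h'$, and a telescoping dimension count. The only difference is that your rank--nullity lower bound $\dim\ker D\ge\binom{N}{\ell}-\binom{N}{\ell-1}$ already suffices where the paper proves equality via surjectivity of $D_\ell$, so your second (inductive) spanning argument is not needed; its $a>N/2$ case is only sketched anyway.
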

	
	\begin{proof}
		Write
		\[
		V_r=L^2\!\left(\binom{E}{r}\right).
		\]
		Define the up and down operators
		\[
		U_r:V_r\to V_{r+1},
		\qquad
		(U_rf)(A)=\sum_{\substack{B\subset A\\ |B|=r}}f(B),
		\]
		and
		\[
		D_r:V_r\to V_{r-1},
		\qquad
		(D_rf)(B)=\sum_{\substack{A\supset B\\ |A|=r}}f(A).
		\]
		Then
		\[
		D_{r+1}=U_r^*.
		\]
		The lower-shadow condition is exactly
		\[
		H_E^\ell=\ker D_\ell
		\]
		for \(\ell\ge 1\), while \(H_E^0=V_0\).
		
		We use the elementary identity
		\begin{align}\label{eqn:difference-identity}
			D_{r+1}U_r-U_{r-1}D_r=(N-2r)I_{V_r}.
		\end{align}
		Indeed, evaluating both sides at an \(r\)-set \(S\), the off-diagonal terms cancel, while the coefficient of \(f(S)\) is \((N-r)-r=N-2r\).
		
		Set \(W_\ell^\ell=H_E^\ell\) and \(W_{r+1}^\ell=U_rW_r^\ell\).  A direct induction using \eqref{eqn:difference-identity} gives, for every \(\ell\le r<N-\ell\),
		\begin{align}\label{eqn:const-identity}
			U_r^*U_r
			=
			D_{r+1}U_r
			=
			(r+1-\ell)(N-r-\ell)I
			\quad\text{on }W_r^\ell .
		\end{align}
		Since the scalar in \eqref{eqn:const-identity} is positive, \(U_r\) has full column rank on \(W_r^\ell\).  Hence the successive lifts are injective up to level \(N-\ell\).  In particular, the lift \(h\mapsto h^{(a)}\) is injective on \(H_E^\ell\) whenever \(\ell\le\min(a,N-a)\).
		
		Also, since \(D_\ell=U_{\ell-1}^*\), the injectivity of \(U_{\ell-1}\) for \(\ell\le N/2\) implies that \(D_\ell:V_\ell\to V_{\ell-1}\) is surjective.  Therefore
		\[
		\dim H_E^\ell
		=
		\dim\ker D_\ell
		=
		\binom{N}{\ell}-\binom{N}{\ell-1},
		\qquad 0\le \ell\le N/2,
		\]
		with \(\binom{N}{-1}=0\).  Hence
		\[
		\sum_{\ell=0}^{\min(a,N-a)}
		\dim H_E^\ell
		=
		\binom{N}{\min(a,N-a)}
		=
		\binom{N}{a}.
		\]
		
		It remains to prove orthogonality.  Let \(\ell<\ell'\), \(h\in H_E^\ell\), and \(g\in H_E^{\ell'}\).  Here \(U_{\ell\to a}\) denotes the iterated up map from level \(\ell\) to level \(a\), so \(U_{\ell\to a}h\) is a positive scalar multiple of \(h^{(a)}\).  By adjointness and \eqref{eqn:const-identity},
		\[
		\left\langle U_{\ell\to a}h,\ U_{\ell'\to a}g\right\rangle
		=
		\left\langle
		D_{\ell'+1}\cdots D_a\,U_{\ell\to a}h,\ g
		\right\rangle
		=
		c\left\langle U_{\ell\to\ell'}h,\ g\right\rangle
		\]
		for some scalar \(c\).  Applying adjointness once more,
		\[
		\left\langle U_{\ell\to\ell'}h,\ g\right\rangle
		=
		\left\langle h,\ D_{\ell+1}\cdots D_{\ell'}g\right\rangle
		=0,
		\]
		because \(g\in\ker D_{\ell'}\).  Thus the lifted spaces for different \(\ell\) are orthogonal.
		
		The dimension count above shows that these orthogonal injective images have total dimension \(\binom{N}{a}\), which is the dimension of \(V_a=L^2\!\left(\binom{E}{a}\right)\).  Hence they fill \(V_a\), proving the claimed orthogonal decomposition.
	\end{proof}

	\begin{lemma}[Disjointness averaging]\label{lem:disjoint-average}
		Let \(h\in H_E^\ell\).  If \(|A|=a\), and \(B\), given $A$, is chosen uniformly among the \(b\)-subsets of \(E\setminus A\), then
		\[
		\E[h^{(b)}(B)\mid A]
		=(-1)^\ell\frac{(b)_\ell}{(N-a)_\ell}h^{(a)}(A),
		\]
		with the convention that the right side is zero if the denominator vanishes.  Here \((r)_\ell=r(r-1)\cdots(r-\ell+1)\).
	\end{lemma}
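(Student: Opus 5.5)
We prove the identity by expanding the lift and averaging one $\ell$-set at a time. By definition,
\[
\E[h^{(b)}(B)\mid A]=\sum_{|L|=\ell}h(L)\,\Pr(L\subset B\mid A).
\]
If $L\cap A\neq\varnothing$, this probability is zero. If $L\subset E\setminus A$, it equals $\binom{N-a-\ell}{b-\ell}/\binom{N-a}{b}=(b)_\ell/(N-a)_\ell$, which depends only on $\ell$. Hence
\[
\E[h^{(b)}(B)\mid A]=\frac{(b)_\ell}{(N-a)_\ell}\,S(A),\qquad S(A):=\sum_{L\subset E\setminus A,\ |L|=\ell}h(L).
\]
If $N-a<\ell$ or $b<\ell$, the sum is empty or every probability is zero, which matches the stated convention. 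The whole proof therefore reduces to the identity $S(A)=(-1)^\ell h^{(a)}(A)$ for $h\in H_E^\ell$.

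To prove it, we run inclusion--exclusion over the intersection with $A$. For $0\le j\le\ell$, set
\[
\Sigma_j(A)=\sum_{|L|=\ell,\ |L\cap A|=j}h(L),
\]
so that $S(A)=\Sigma_0(A)$ and $h^{(a)}(A)=\Sigma_\ell(A)$. For each $k<\ell$, the lower-shadow condition says that $\sum_{L\supset T}h(L)=0$ for every $T$ with $|T|=k$. Summing this over all $k$-subsets $T\subset A$ counts each $L$ with $|L\cap A|=j$ exactly $\binom{j}{k}$ times, which gives the linear system
\[
\sum_{j=k}^{\ell}\binom{j}{k}\Sigma_j(A)=0,\qquad 0\le k\le \ell-1.
\]
We invert this by binomial inversion. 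Multiply the $k$-th equation by $(-1)^k$ and sum over $0\le k\le\ell-1$. Since $\sum_{k=0}^{j}(-1)^k\binom{j}{k}=\delta_{j0}$, the full sum over $k\le j$ picks out $\Sigma_0$. The missing $k=\ell$ term contributes only to $j=\ell$, so we obtain
\[
\Sigma_0(A)+\bigl(0-(-1)^\ell\bigr)\Sigma_\ell(A)=0,\qquad\text{i.e.}\qquad \Sigma_0(A)=(-1)^\ell\Sigma_\ell(A).
\]
This is the desired identity. Note that the step uses only shadows over sets $T\subset A$ of size less than $\ell$, all of which are covered by the definition of $H_E^\ell$.

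Two edge cases need a remark. When $\ell=0$, $h$ is constant and both sides equal $h$ trivially. When $a<\ell$, the convention gives $h^{(a)}=0$, and the argument still applies: the equations with $k\le a$ force $\Sigma_0=0$, because $\Sigma_j=0$ for $j>a$.

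The only delicate step is the bookkeeping of the sign and the $k=\ell$ term in the inversion; the rest is direct counting. As a sanity check, the case $\ell=1$ reads: if $\sum_e h(e)=0$, then $\sum_{e\notin A}h(e)=-\sum_{e\in A}h(e)$.
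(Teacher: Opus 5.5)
Your proof is correct and is essentially the paper's argument: the same averaging step reduces everything to $\sum_{L\subseteq E\setminus A,\,|L|=\ell}h(L)=(-1)^\ell h^{(a)}(A)$, and your alternating-sign inversion of the system $\sum_{j\ge k}\binom{j}{k}\Sigma_j(A)=0$ is the paper's inclusion--exclusion over $J\subseteq A$ regrouped by $|J|=k$, with the $|J|<\ell$ shadow terms vanishing and only $|J|=\ell$ surviving. Your handling of the degenerate case $N-a<\ell$ (every $\ell$-set meets $A$, so the left side is zero) is, if anything, stated more cleanly than in the paper.
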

	
	\begin{proof}
		When \((N-a)_\ell>0\), averaging over \(B\subseteq E\setminus A\) gives
		\begin{equation} \label{eq:hbBgivenA}
			\E[h^{(b)}(B)\mid A] = {N-a \choose b}^{-1} \sum_{B \subseteq E \setminus A, \, |B|=b} h^{(b)}(B)
			=\frac{(b)_\ell}{(N-a)_\ell}
			\sum_{L\subseteq E\setminus A,\ |L|=\ell}h(L).
		\end{equation}
		On the set $\{L \subset E: \, |L| = \ell\}$, define the measure $\mu(\{L\}) = h(L)$.
		For $u \in A$, let $C_u = \{L \subset E: \, |L| = \ell, \, u \in L\}$.
		By inclusion-exclusion,
		\[
		\mu\left( \bigcup_{u \in A} C_u \right) = \sum_{j=1}^a (-1)^{j-1} \sum_{J: \, |J| = j} \mu\left( \bigcap_{u \in J} C_u \right).
		\]
		That is,
		\[
		\sum_{L: \, L \cap A \neq \emptyset} h(L) = \sum_{j=1}^a (-1)^{j-1} \sum_{J: \, |J| = j} \sum_{L \supseteq J, \, |L| = \ell} h(L) = \sum_{J \subseteq A, \, |J| \geq 1} (-1)^{|J| - 1} \sum_{L \supseteq J, \, |L| = \ell} h(L).
		\]
		Equivalently,
		\[
		\sum_{L\subseteq E\setminus A,\ |L|=\ell}h(L)
		=
		\sum_{J\subseteq A}(-1)^{|J|}
		\sum_{L\supseteq J,\ |L|=\ell}h(L).
		\]
		All terms with \(|J|<\ell\) vanish by the lower-shadow condition.  Only terms with \(|J|=\ell\) remain, giving
		\[
		\sum_{L\subseteq E\setminus A,\ |L|=\ell}h(L)
		=(-1)^\ell\sum_{J\subseteq A,\ |J|=\ell}h(J)
		=(-1)^\ell h^{(a)}(A).
		\]
		Combining this with \eqref{eq:hbBgivenA} gives the desired relation.
		If \((N-a)_\ell=0\), then no \(b\)-subset of \(E\setminus A\) can carry a nonzero \(\ell\)-lift in the active cases, and both sides are zero by convention.  This proves the claim.
	\end{proof}
	
	\subsubsection{Generated sectors and the coefficient matrices}
	
	Let
	\[
	R=L^2(\mathcal F_1)+L^2(\mathcal F_2)+L^2(\mathcal F_3).
	\]
	If \(f\perp R\), then \(P_if=0\) for each \(i\), so \(S=P_1+P_2+P_3\) vanishes on \(R^\perp\).  It is therefore enough to analyze \(S\) on \(R\).
	
	For a feasible value \(s=|B_i|\), write
	\[
	a_i(s)=s+d_i.
	\]
	Fix harmonic degrees \((\ell,m)\).  Put
	\[
	V_{\ell,m}=H_P^\ell\otimes H_Q^m.
	\]
	Let \(I_{\ell,m}\) be the set of active indices \((i,s)\) such that the \((\ell,m)\)-layer is present in functions of \((A_i,B_i)\), namely
	\[
	\ell\le\min(a_i(s),p-a_i(s)),
	\qquad
	m\le\min(s,q-s).
	\]
	For \((i,s)\in I_{\ell,m}\), \(h\in H_P^\ell\), and \(g\in H_Q^m\), define
	\[
	F_{i,s}^{h,g}(A,B)
	=\1_{\{|B_i|=s\}}h^{(a_i(s))}(A_i)g^{(s)}(B_i).
	\]
	Let \(R_{\ell,m}\) be the span of these functions over all \((i,s)\in I_{\ell,m}\) and all \(h\otimes g\in V_{\ell,m}\).
	
	Now fix \(i\neq j\).  Given \(A_i,B_i\), row \(j\) is obtained by choosing
	\[
	A_j\subseteq P\setminus A_i,
	\qquad
	B_j\subseteq Q\setminus B_i,
	\]
	with sizes \(|A_j|=a_j(t)=t+d_j\) and \(|B_j|=t\), where \(t\) ranges over feasible values.  The probability of a given \(t\), conditioned on \(|B_i|=s\), is
	\[
	q_{i,s}^{j,t}
	=
	\frac{\binom{p-a_i(s)}{a_j(t)}\binom{q-s}{t}}{N_i(s)},
	\]
	where \(N_i(s)\) is the total number of completions after row \(i\) is frozen.  Using Lemma~\ref{lem:disjoint-average} for \(P\) and \(Q\) gives, for \((j,t)\in I_{\ell,m}\),
	\begin{equation}\label{eq:projection-formula}
		P_iF_{j,t}^{h,g}
		=
		\sum_{s:(i,s)\in I_{\ell,m}}
		(-1)^{\ell+m}q_{i,s}^{j,t}
		\frac{(a_j(t))_\ell}{(p-a_i(s))_\ell}
		\frac{(t)_m}{(q-s)_m}
		F_{i,s}^{h,g}.
	\end{equation}
	Also \(P_jF_{j,t}^{h,g}=F_{j,t}^{h,g}\).  Thus, on the generated sector, the operator
	\[
	S=P_1+P_2+P_3
	\]
	is intertwined with
	\[
	M=I+(-1)^{\ell+m}C,
	\]
	where \(C\) acts on the multiplicity indices \((i,s)\in I_{\ell,m}\), has zero diagonal, and has nonnegative off-diagonal entries
	\begin{equation}\label{eq:C-entry}
		C_{i,s;j,t}
		=q_{i,s}^{j,t}
		\frac{(a_j(t))_\ell}{(p-a_i(s))_\ell}
		\frac{(t)_m}{(q-s)_m},
		\qquad i\neq j.
	\end{equation}
	Equivalently, on the full coefficient space \(\bigoplus_{(i,s)\in I_{\ell,m}}V_{\ell,m}\), the operator is \(M\otimes\Id_{V_{\ell,m}}\).
	
	\begin{lemma}[Sector decomposition of \(R\)]\label{lem:sector-decomp}
		The spaces \(R_{\ell,m}\) form an orthogonal direct sum
		\[
		R=\bigoplus_{\ell,m}^{\perp}R_{\ell,m}.
		\]
		Each \(R_{\ell,m}\) is invariant under every \(P_i\), and hence under \(S=P_1+P_2+P_3\).  The space \(R_{0,0}\) is the additive scalar count sector
		\[
		R_{0,0}=\sum_{i=1}^3L^2(|B_i|),
		\]
		viewed as a subspace of the full count-dependent functions.
	\end{lemma}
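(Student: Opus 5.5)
The proof has three parts: show that the sectors exhaust \(R\), show that they are mutually orthogonal, and show that each is invariant under every \(P_i\); the identification of \(R_{0,0}\) then follows.

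\emph{Exhaustion.} Fix \(i\) and a function \(\phi\in L^2(\mathcal F_i)\), so \(\phi=\phi(A_i,B_i)\). Write
\[
\phi=\sum_s \1_{\{|B_i|=s\}}\,\phi_s(A_i,B_i),
\]
where for fixed \(s\) the function \(\phi_s\) is defined on pairs with \(|A_i|=a_i(s)\) and \(|B_i|=s\). This domain is the product \(\binom{P}{a_i(s)}\times\binom{Q}{s}\), so its \(L^2\) space is the tensor product of the two one-factor spaces. Apply Lemma~\ref{lem:johnson-decomp} to each factor (with \(E=P\) and \(E=Q\)) and take tensor products. This writes \(\phi_s\) as a sum of products \(h^{(a_i(s))}(A_i)g^{(s)}(B_i)\) with \(h\in H_P^\ell\), \(g\in H_Q^m\), and with \((\ell,m)\) running exactly over the pairs allowed by \(I_{\ell,m}\). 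Hence \(L^2(\mathcal F_i)\subseteq\sum_{\ell,m}R_{\ell,m}\), so \(R=\sum_{\ell,m}R_{\ell,m}\).

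\emph{Orthogonality.} Take \(F=F^{h,g}_{i,s}\) and \(G=F^{h',g'}_{j,t}\) with \((\ell,m)\neq(\ell',m')\).
\begin{itemize}
\item If \(i=j\), condition on \(|B_i|=s\). The conditional law of \((A_i,B_i)\) is uniform on the product of slices. The inner product factorizes into a \(P\)-factor and a \(Q\)-factor, and at least one factor vanishes by the orthogonality in Lemma~\ref{lem:johnson-decomp}.
\item If \(i\neq j\), compute \(\langle F,G\rangle=\langle F,P_iG\rangle\). By \eqref{eq:projection-formula}, \(P_iG\) is a combination of \(F^{h',g'}_{i,s'}\), i.e.\ it lies in the \((\ell',m')\) sector of \(L^2(\mathcal F_i)\). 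The inner product therefore reduces to the \(i=j\) case and vanishes.
\end{itemize}
Before using \eqref{eq:projection-formula} I will justify it, since the paper derived it just before the lemma. Given \((A_i,B_i)\) and the count \(t\), the set \(A_j\) is uniform among \(a_j(t)\)-subsets of \(P\setminus A_i\), \(B_j\) is uniform among \(t\)-subsets of \(Q\setminus B_i\), and the two are independent. This holds because the third row is then determined, and every such pair of subsets yields exactly one valid state. Lemma~\ref{lem:disjoint-average} then applies to each factor separately.

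\emph{Invariance.} \(P_jF^{h,g}_{j,t}=F^{h,g}_{j,t}\) trivially, and for \(i\neq j\) formula \eqref{eq:projection-formula} shows \(P_iF^{h,g}_{j,t}\in R_{\ell,m}\). The restriction to \(s\) with \((i,s)\in I_{\ell,m}\) in that sum needs a check: for any other \(s\), the lifted function must vanish. This follows because whenever the harmonic layer is absent, the factor \((p-a_i(s))_\ell\) or \((q-s)_m\) kills the term, or the lift is zero by the convention in Lemma~\ref{lem:disjoint-average}. Checking this boundary bookkeeping (zero denominators, inactive indices) is the main obstacle; the rest is formal. I would handle it by showing directly that the sum \(\sum_{L\subseteq E\setminus A}h(L)\) is zero when \(|E\setminus A|<\ell\), and that the lower-shadow identity applies when \(N-a<\ell\).

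\emph{The sector \(R_{0,0}\).} When \(\ell=m=0\), the harmonic spaces are the constants and every lift is constant. So \(F^{1,1}_{i,s}=\1_{\{|B_i|=s\}}\), and \(R_{0,0}\) is spanned by indicators of \(|B_i|\) for \(i=1,2,3\); that is, \(R_{0,0}=\sum_i L^2(|B_i|)\).

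Combining the three parts: \(R\) is the sum of the \(R_{\ell,m}\), the sum is orthogonal, and each sector is \(P_i\)-invariant, which proves the lemma.
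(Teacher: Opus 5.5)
Your proposal is correct and follows essentially the same route as the paper. Both prove exhaustion by applying the Johnson decomposition blockwise on each $\{|B_i|=s\}$, orthogonality via disjoint supports and fixed-row Johnson orthogonality when $i=j$ and via $\langle F,G\rangle=\langle F,P_iG\rangle$ with \eqref{eq:projection-formula} when $i\neq j$, and invariance from that same formula.

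One small correction to your boundary bookkeeping: if $(j,t)$ is active and $q_{i,s}^{j,t}>0$, then $p-a_i(s)\ge a_j(t)\ge\ell$ and $q-s\ge t\ge m$. So the denominators never vanish, and the only inactive $s$ that arise have $a_i(s)<\ell$ or $s<m$; in those cases the lower-shadow identity, not the condition $N-a<\ell$, makes the term zero.
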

	
	\begin{proof}
		For each feasible pair \((i,s)\), define
		\[
		W_{i,s}=\{\1_{\{|B_i|=s\}}\varphi(A_i,B_i)\}\subseteq L^2(\mathcal F_i),
		\]
		where \(\varphi\) ranges over functions on pairs \((A_i,B_i)\) with \(|B_i|=s\) and \(|A_i|=a_i(s)\).  Then
		\[
		L^2(\mathcal F_i)=\bigoplus_s W_{i,s},
		\qquad
		R=\sum_{i,s}W_{i,s}.
		\]
		For fixed \((i,s)\), Lemma~\ref{lem:johnson-decomp} applied to \(P\) and \(Q\) gives
		\[
		W_{i,s}=\bigoplus_{\ell,m}^{\perp}W_{i,s}^{\ell,m},
		\]
		where \(W_{i,s}^{\ell,m}\) is the span of the functions \(F_{i,s}^{h,g}\).  Hence \(R=\sum_{\ell,m}R_{\ell,m}\).
		
		It remains to prove orthogonality across different degree pairs.  Take generators
		\[
		F_{i,s}^{h,g}\in W_{i,s}^{\ell,m},
		\qquad
		F_{j,t}^{h',g'}\in W_{j,t}^{\ell',m'},
		\]
		with \((\ell,m)\neq(\ell',m')\).  If \(i=j\), then either \(s\neq t\), in which case the supports are disjoint, or \(s=t\), in which case orthogonality follows from the tensor product of the Johnson decompositions on \(P\) and \(Q\).
		
		Suppose now that \(i\neq j\).  Since \(F_{i,s}^{h,g}\) is \(\mathcal F_i\)-measurable and \(P_i\) is the orthogonal projection onto \(L^2(\mathcal F_i)\),
		\[
		\ip{F_{i,s}^{h,g}}{F_{j,t}^{h',g'}}
		=
		\ip{F_{i,s}^{h,g}}{P_iF_{j,t}^{h',g'}}.
		\]
		Formula~\eqref{eq:projection-formula}, applied to the degree pair \((\ell',m')\), shows that \(P_iF_{j,t}^{h',g'}\) is a linear combination of functions \(F_{i,s'}^{h',g'}\) with the same degree pair \((\ell',m')\).  Only the term \(s'=s\) can have nonzero inner product with \(F_{i,s}^{h,g}\), and that inner product is zero by fixed-row Johnson orthogonality because \((\ell,m)\neq(\ell',m')\).  Hence \(R_{\ell,m}\perp R_{\ell',m'}\).
		
		The same conditional expectation formula proves invariance.  If \(F_{j,t}^{h,g}\in R_{\ell,m}\), then \(P_jF_{j,t}^{h,g}=F_{j,t}^{h,g}\), while for \(i\neq j\), \(P_iF_{j,t}^{h,g}\) is a linear combination of generators of \(R_{\ell,m}\).  Thus \(P_iR_{\ell,m}\subseteq R_{\ell,m}\) for every \(i\).
		
		Finally, when \((\ell,m)=(0,0)\), the lifted functions are constants on fixed count blocks, so the resulting space is exactly \(\sum_i L^2(|B_i|)\).  This is contained in the full space of functions of \((|B_1|,|B_2|,|B_3|)\).
	\end{proof}
	
	\begin{lemma}[Row-substochastic bound]\label{lem:row-substochastic}
		For every \((\ell,m)\neq(0,0)\), every row sum of \(C\) is at most \(1\).  Consequently \(\rho(C)\le1\).
	\end{lemma}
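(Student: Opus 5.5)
The plan is to bound each row sum of $C$ by pairing the two rows $j,k\neq i$. Given $(A_i,B_i)$, the other two rows together partition the remaining columns. Then $\rho(C)\le 1$ follows because $C$ is entrywise nonnegative, so its spectral radius is at most its maximal row sum (Perron--Frobenius, or simply $\rho\le\|C\|_{\infty}$).

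\textbf{Step 1: pair $t$ with $t'$.} Fix $(i,s)\in I_{\ell,m}$ and write $N_P=p-a_i(s)$ and $N_Q=q-s$. Let $\{j,k\}=\{1,2,3\}\setminus\{i\}$. Once row $i$ is frozen, choosing $(A_j,B_j)$ determines $(A_k,B_k)=(P\setminus(A_i\cup A_j),\,Q\setminus(B_i\cup B_j))$. So the value $|B_j|=t$ corresponds bijectively to $|B_k|=t':=N_Q-t$, with
\[
a_j(t)+a_k(t')=N_P,\qquad t+t'=N_Q .
\]
Under this bijection the completion probabilities agree:
\[
q_{i,s}^{j,t}=q_{i,s}^{k,t'},\qquad \sum_t q_{i,s}^{j,t}=1 .
\]
Indeed, both count the same completions, and $N_i(s)$ is the total number of completions.

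\textbf{Step 2: express the row sum.} Extend the sum over $(j,t)$ from $(j,t)\in I_{\ell,m}$ to all feasible $t$; this only adds nonnegative terms. Writing $x_j(t)=\frac{(a_j(t))_\ell}{(N_P)_\ell}\frac{(t)_m}{(N_Q)_m}$ and $x_k(t')$ analogously, the row sum of $C$ at $(i,s)$ is at most
\[
\sum_t q_{i,s}^{j,t}\bigl(x_j(t)+x_k(N_Q-t)\bigr).
\]
Since $(i,s)\in I_{\ell,m}$ gives $\ell\le N_P$ and $m\le N_Q$, the denominators are positive, so none of the zero-denominator conventions are triggered. It therefore suffices to show the pointwise bound $x_j+x_k\le1$ for each $t$.

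\textbf{Step 3: the elementary inequality.} For integers $0\le a\le N$ and $\ell\ge0$, we have $0\le(a)_\ell/(N)_\ell\le1$. If moreover $\ell\ge1$, then $(a)_\ell/(N)_\ell\le a/N$, because $\frac{a-r}{N-r}\le1$ for each remaining factor (and the ratio is $0$ if $a<\ell$).

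Because $(\ell,m)\ne(0,0)$, at least one of $\ell,m$ is positive; by symmetry assume $\ell\ge1$. Bound the $Q$-factors by $1$ and the $P$-factors by the linear bound:
\[
x_j+x_k\le \frac{a_j(t)}{N_P}+\frac{a_k(t')}{N_P}=1 .
\]
The case $m\ge1$ is identical, using $t+t'=N_Q$. Summing against the probabilities $q_{i,s}^{j,t}$ gives a row sum of at most $1$.

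\textbf{Main obstacle.} The delicate point is Step 1. A naive bound treats $j$ and $k$ separately, bounding each by $\sum_t q_{i,s}^{j,t}=1$, and only gives a row sum of $2$. The pairing $t\leftrightarrow N_Q-t$, with equal weights, is what lets the linear bound in Step 3 telescope to $1$. I would double-check this by verifying that the map $(A_j,B_j)\mapsto(A_k,B_k)$ really is a bijection between the completion sets counted in $q_{i,s}^{j,t}$ and in $q_{i,s}^{k,t'}$.
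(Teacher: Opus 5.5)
Your proposal is correct and follows the paper's proof. You pair the contributions of the two remaining rows $j,k$ through a common completion of row $i$, which reduces the row-sum bound to the pointwise inequality $(a_j)_\ell (b_j)_m+(a_k)_\ell (b_k)_m\le (a_j+a_k)_\ell (b_j+b_k)_m$, and then you conclude with $\rho(C)\le\|C\|_\infty$. The only difference is cosmetic: the paper proves this falling-factorial inequality by counting ordered label choices that lie entirely in one part, while you derive it from $(a)_\ell/(N)_\ell\le a/N$ in a coordinate of positive degree together with the trivial bound $1$ in the other coordinate.
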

	
	\begin{proof}
		Fix an output index \((i,s)\in I_{\ell,m}\), and let \(j,k\) be the two rows different from \(i\).  Conditional on row \(i\), the remaining counts satisfy
		\[
		a_j+a_k=p-a_i(s),
		\qquad
		b_j+b_k=q-s.
		\]
		The row sum of \(C\) is at most the same sum extended over all feasible counts of the two remaining rows, namely
		\[
		\E\left[
		\frac{(a_j)_\ell(b_j)_m+(a_k)_\ell(b_k)_m}
		{(a_j+a_k)_\ell(b_j+b_k)_m}
		\ \middle|\ |B_i|=s
		\right],
		\]
		with the convention that the ratio is zero when the denominator is zero.  For active \((i,s)\), this convention is harmless.
		
		For nonnegative integers \(x,y,u,v\), and \((\ell,m)\neq(0,0)\),
		\[
		(x)_\ell(u)_m+(y)_\ell(v)_m
		\le
		(x+y)_\ell(u+v)_m.
		\]
		The right side counts ordered choices of \(\ell\) distinct plus labels and \(m\) distinct minus labels from the union of two parts of sizes \((x,u)\) and \((y,v)\).  The two terms on the left count only the choices lying entirely in the first part or entirely in the second part.  Since at least one of \(\ell,m\) is positive, these two classes are disjoint subfamilies of the choices counted by the right side.  Therefore every row sum is at most \(1\), and the standard matrix norm bound gives \(\rho(C)\le1\).
	\end{proof}
	
	\begin{lemma}[Quotient/intertwining lemma]\label{lem:quotient}
		Let \(U,W\) be finite-dimensional real inner-product spaces.  Let \(T:U\to W\) be a linear map, let \(A:W\to W\) be a self-adjoint operator preserving \(\im T\), and let \(M=I+\epsilon C\) be an operator on \(U\), with \(\epsilon\in\{1,-1\}\), such that
		\[
		AT=TM.
		\]
		If \(\rho(C)\le1\), then every eigenvalue of \(A\) on \(\im T\) is at most \(2\).
	\end{lemma}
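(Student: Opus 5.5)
The plan is to show directly that an eigenvector of $A$ inside $\im T$ must come from an eigenvector of $M$ with the same eigenvalue, and then bound eigenvalues of $M$ using $\rho(C)\le1$.

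\emph{Step 1: restrict to the image.} Since $A$ is self-adjoint and preserves $\im T$, its restriction $A|_{\im T}$ is a self-adjoint operator on the subspace $\im T$ with the induced inner product. It is therefore diagonalizable over $\mathbb R$ with real eigenvalues. Fix such an eigenvalue $\lambda$ with eigenvector $w\in\im T$, $w\neq0$.

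\emph{Step 2: transfer to $M$.} The relation $AT=TM$ gives $(A-\lambda)T=T(M-\lambda)$. Hence $T$ maps $\ker(M-\lambda)^k$ into $\ker(A-\lambda)^k$, and $T$ intertwines $M$ on $U$ with $A|_{\im T}$. The cleanest route is to pass to the quotient $U/\ker T$.
\begin{itemize}
\item First, $\ker T$ is $M$-invariant: if $Tu=0$ then $TMu=ATu=0$.
\item Hence $M$ descends to an operator $\bar M$ on $U/\ker T$.
\item The map $T$ induces an isomorphism $\bar T:U/\ker T\to\im T$ with $\bar T\bar M=A|_{\im T}\bar T$.
\item So $A|_{\im T}$ is similar to $\bar M$.
\end{itemize}
The spectrum of $\bar M$ is contained in the spectrum of $M$, because the characteristic polynomial of $M$ factors as that of $M|_{\ker T}$ times that of $\bar M$. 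Thus $\lambda$ is a (real) eigenvalue of $M$.

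\emph{Step 3: bound.} If $M u=\lambda u$ with $u\neq0$ (possibly over $\mathbb C$, but $\lambda$ is real), then $Cu=\epsilon(\lambda-1)u$. So $\epsilon(\lambda-1)$ is an eigenvalue of $C$, and $|\lambda-1|\le\rho(C)\le1$. This gives $\lambda\le2$, and in fact $\lambda\in[0,2]$.

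The only delicate point is Step 2: $C$ need not be symmetric and $T$ need not be injective. One must avoid claiming that $T$ maps eigenvectors of $M$ onto eigenvectors of $A$, since some may lie in $\ker T$. The quotient argument handles this, because it uses only the $M$-invariance of $\ker T$ and the block-triangular factorization of the characteristic polynomial. No symmetry of $M$ is needed, since the reality of $\lambda$ comes from the self-adjointness of $A$.
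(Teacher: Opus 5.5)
Your proposal is correct and follows essentially the same route as the paper's proof: you show $\ker T$ is $M$-invariant, pass to the induced operator $\bar M$ on $U/\ker T$ (similar to $A|_{\im T}$), and use the block-triangular structure to see that the spectrum of $\bar M$ lies in that of $M$. You then combine the reality of $\lambda$ (from self-adjointness of $A$) with $|\lambda-1|\le\rho(C)\le 1$ to conclude $\lambda\le 2$.
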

	
	\begin{proof}
		The relation \(AT=TM\) implies that \(\ker T\) is \(M\)-invariant.  Therefore \(M\) induces an operator \(\overline M\) on the quotient \(U/\ker T\), and \(T\) identifies this quotient with \(\im T\).  Under this identification, \(A|_{\im T}\) is exactly \(\overline M\).
		
		Choose a basis of \(U\) adapted to \(\ker T\).  The matrix of \(M\) is block upper triangular, with one diagonal block giving the action on \(\ker T\) and the other diagonal block giving \(\overline M\).  Hence every eigenvalue of \(\overline M\) is an eigenvalue of \(M\).  Since \(M=I+\epsilon C\) and \(\rho(C)\le1\), every eigenvalue of \(M\) has the form \(1+
		\epsilon\mu\) with \(|\mu|\le1\).  Because \(A\) is self-adjoint and \(\im T\) is invariant, the restriction \(A|_{\im T}\) is self-adjoint; its eigenvalues are real.  Thus, for such an eigenvalue \(\lambda\), \(\lambda=1+\epsilon\mu\) with \(|\mu|\le1\) and \(\lambda\in\mathbb R\), which implies \(\lambda\le2\).
	\end{proof}
	
	\begin{proposition}[Non-scalar harmonic sectors]\label{prop:nonscalar}
		On every harmonic sector \(R_{\ell,m}\) with \((\ell,m)\neq(0,0)\), all eigenvalues of
		\[
		K_3=\frac13(P_1+P_2+P_3)
		\]
		are at most \(2/3\).
	\end{proposition}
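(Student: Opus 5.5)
The plan is to apply Lemma~\ref{lem:quotient} with \(A=S=P_1+P_2+P_3\) restricted to \(R_{\ell,m}\). Fix \((\ell,m)\neq(0,0)\).

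\textbf{Setting up the intertwining.} Let \(U=\bigoplus_{(i,s)\in I_{\ell,m}}V_{\ell,m}\) be the coefficient space and \(W=R_{\ell,m}\). Define \(T:U\to W\) on elementary tensors by
\[
T\big((i,s),h\otimes g\big)=F_{i,s}^{h,g},
\]
extended linearly. This is well defined because \(F_{i,s}^{h,g}\) is bilinear in \((h,g)\). By definition of \(R_{\ell,m}\) as the span of the generators, \(\im T=R_{\ell,m}\).

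By Lemma~\ref{lem:sector-decomp}, \(R_{\ell,m}\) is invariant under each \(P_i\), hence under \(S\). Since each \(P_i\) is an orthogonal projection, \(S\) is self-adjoint on \(L^2(\pi)\), so its restriction to the invariant subspace \(R_{\ell,m}\) is self-adjoint.

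\textbf{Checking \(ST=T(M\otimes\Id)\).} Take a generator \(F_{j,t}^{h,g}\). The diagonal term comes from \(P_jF_{j,t}^{h,g}=F_{j,t}^{h,g}\), which is the identity part of \(M\). For \(i\neq j\), formula~\eqref{eq:projection-formula} expresses \(P_iF_{j,t}^{h,g}\) as \(\sum_s(-1)^{\ell+m}C_{i,s;j,t}F_{i,s}^{h,g}\). Summing over \(i\) gives
\[
SF_{j,t}^{h,g}=\sum_{(i,s)}M_{i,s;j,t}\,F_{i,s}^{h,g},
\qquad M=I+(-1)^{\ell+m}C,
\]
which is exactly the relation \(ST=T(M\otimes\Id_{V_{\ell,m}})\). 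The matrix is indexed so that it acts on column coefficients. Whether one uses \(M\) or its transpose is immaterial here, because the spectral radius of \(C\) equals that of \(C^\top\).

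\textbf{Concluding.} Set \(\epsilon=(-1)^{\ell+m}\) and replace \(C\) by \(C\otimes\Id\), which has the same spectral radius. Lemma~\ref{lem:row-substochastic} gives \(\rho(C)\le1\), since \((\ell,m)\neq(0,0)\). Lemma~\ref{lem:quotient} then shows that every eigenvalue of \(S\) on \(R_{\ell,m}\) is at most \(2\). Hence every eigenvalue of \(K_3=S/3\) on \(R_{\ell,m}\) is at most \(2/3\).

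\textbf{Main obstacle.} The only delicate point is the bookkeeping that makes \eqref{eq:projection-formula} genuinely intertwine \(S\) with a single matrix independent of \((h,g)\), including at the boundary indices.

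1. The sum over \(s\) must range only over active \((i,s)\in I_{\ell,m}\). When \((i,s)\) is inactive, the lift vanishes: for example \(h^{(a)}=0\) if \(a<\ell\), and for \(a>p-\ell\) the factor \((p-a_i(s))_\ell\) is zero, in which case Lemma~\ref{lem:disjoint-average} returns \(0\) by its convention.
2. One must check that these dropped terms really contribute zero, so that the relation holds exactly on all of \(U\), not only modulo \(\ker T\).

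Once this is verified, the conclusion follows directly from the lemmas.
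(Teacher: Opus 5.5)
Your proposal is correct and follows the paper's proof essentially step for step: the coefficient map \(T_{\ell,m}\) onto \(R_{\ell,m}\), the exact intertwining \(ST_{\ell,m}=T_{\ell,m}\bigl((I+(-1)^{\ell+m}C)\otimes\Id_{V_{\ell,m}}\bigr)\) from \eqref{eq:projection-formula}, the bound \(\rho(C)\le1\) from Lemma~\ref{lem:row-substochastic}, and Lemma~\ref{lem:quotient} applied to \(S|_{R_{\ell,m}}\). Your inactive-index bookkeeping is also sound: the lifts \(h^{(a)}\) vanish for \(a<\ell\) and for \(a>|E|-\ell\), so the dropped blocks contribute nothing, a point the paper folds into \eqref{eq:projection-formula} without comment.
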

	
	\begin{proof}
		Let
		\[
		U_{\ell,m}=\bigoplus_{(i,s)\in I_{\ell,m}}V_{\ell,m},
		\]
		and define \(T_{\ell,m}:U_{\ell,m}\to L^2\) by
		\[
		T_{\ell,m}(e_{i,s}\otimes(h\otimes g))=F_{i,s}^{h,g}.
		\]
		By definition, \(\im T_{\ell,m}=R_{\ell,m}\).  Lemma~\ref{lem:sector-decomp} gives that \(R_{\ell,m}\) is invariant under \(S=P_1+P_2+P_3\), and \(S|_{R_{\ell,m}}\) is self-adjoint.  Formula~\eqref{eq:projection-formula} gives the exact intertwining relation
		\[
		ST_{\ell,m}
		=
		T_{\ell,m}\left((I+(-1)^{\ell+m}C)\otimes\Id_{V_{\ell,m}}\right).
		\]
		Lemma~\ref{lem:row-substochastic} gives \(\rho(C)\le1\), hence the same spectral-radius bound for \(C\otimes\Id_{V_{\ell,m}}\).  Applying Lemma~\ref{lem:quotient} with \(A=S|_{R_{\ell,m}}\) shows that every eigenvalue of \(S\) on \(R_{\ell,m}\) is at most \(2\).  Dividing by \(3\) proves the claim for \(K_3=S/3\).
	\end{proof}
	
	\subsection{Proof of the three-row inequality}
	
	\begin{proof}[Proof of Theorem~\ref{thm:three-row}]
		The operator \(S=P_1+P_2+P_3\) is self-adjoint.  Since
		\[
		R=L^2(\mathcal F_1)+L^2(\mathcal F_2)+L^2(\mathcal F_3),
		\]
		we have \(P_if=0\) for all \(i\) whenever \(f\perp R\).  Thus \(K_3=S/3\) vanishes on \(R^\perp\), and all nonzero eigenvalues occur inside \(R\).
		
		By Lemma~\ref{lem:sector-decomp},
		\[
		R=\bigoplus_{\ell,m}^{\perp}R_{\ell,m},
		\]
		and each sector is invariant under \(K_3\).  On the additive scalar count sector \(R_{0,0}\), Lemmas~\ref{lem:bessel} and~\ref{lem:count-id} give nonconstant eigenvalues at most \(2/3\), because \(R_{0,0}\) is an invariant subspace of the full count-dependent chain.  On every non-scalar sector, Proposition~\ref{prop:nonscalar} gives the same upper bound.  The only eigenvalue larger than \(2/3\) is the constant eigenvalue \(1\).  Hence \(\operatorname{gap}(K_3)\ge1/3\).
		
		Finally,
		\[
		\ip{f}{K_3f}
		=\frac13\sum_{i=1}^3\ip{f}{P_if}
		=\frac13\sum_{i=1}^3\norm{P_if}_2^2.
		\]
		For mean-zero \(f\), \(\norm{P_if}_2^2=\Var(\E[f\mid\mathcal F_i])\).  The eigenvalue bound \(K_3\le(2/3)I\) on the mean-zero space is therefore equivalent to
		\[
		\sum_{i=1}^3\Var(\E[f\mid\mathcal F_i])\le2\Var(f).
		\]
	\end{proof}
	
\section{Proofs of the Main Theorem and  Corollary \ref{cor:MV-fixed-margin}}
	
	\begin{proof}[Proof of Theorem~\ref{thm:main}]
		First assume \(m\ge3\). Fix any triple of rows and condition on all rows outside this triple. The remaining configurations form a feasible \(3\times n\) fixed-margin state space. Applying Theorem~\ref{thm:three-row} with \(m=3\), the corresponding row-pair heat-bath chain has spectral gap at least \(1/3\). Proposition~\ref{prop:connectivity} implies that the two-row heat-bath chain is irreducible, because every single swap is contained in the support of the corresponding two-row heat-bath update.  Lemma~\ref{lem:three-row-reduction}, applied with \(M=m\), gives
		\[
		\gamma_H\ge \binom{m}{2}^{-1}.
		\]
		Then Lemma~\ref{lem:two-column} gives
		\[
		\gamma_{\rm sw}
		\ge \binom{n}{2}^{-1}\gamma_H
		\ge \binom{m}{2}^{-1}\binom{n}{2}^{-1}.
		\]
		
		For \(m=2\), the row-pair heat-bath update is a one-step perfect sampler and therefore has spectral gap \(1\), so Lemma~\ref{lem:two-column} yields
		\[
		\gamma_{\rm sw}
		\ge \binom n2^{-1}
		=
		\binom22^{-1}\binom n2^{-1}.
		\]
		
		The mixing-time estimate follows from the standard reversible chain bound (Theorem 12.4 of \cite{levin2017markov})
		\[
		t_{\rm mix}(\varepsilon)
		\le \gamma_{\rm sw}^{-1}
		\left(\log\frac1{\pi_{\min}}+\log\varepsilon^{-1}\right),
		\]
		with \(\pi_{\min}=1/|\Omega(r,c)|\), and from \(|\Omega(r,c)|\le2^{mn}\).
	\end{proof}
\begin{proof}[Proof of Corollary~\ref{cor:MV-fixed-margin}]
	Let
	\[
	\Omega=\Omega(r,c)
	\]
	and let \(G_{\rm sw}\) be the simple graph on \(\Omega\) in which two matrices
	are adjacent if and only if they differ by one \(2\times2\) swap.  If
	\(|\Omega|\le1\), then the expansion statement is vacuous.  If \(m<2\) or
	\(n<2\), then a feasible matrix is uniquely determined by its margins, so the
	statement is again vacuous.  Hence assume \(m,n\ge2\) and \(|\Omega|\ge2\).
	
	Put
	\[
	M=\binom m2\binom n2.
	\]
	Let \(P_{\rm sw}^{\rm nl}\) be the corresponding non-lazy swap chain: at each
	step it chooses one row pair and one column pair uniformly, performs the swap
	if the chosen \(2\times2\) submatrix is switchable, and otherwise stays put.
	Then the lazy swap chain satisfies
	\[
	P_{\rm sw}=\frac12 I+\frac12 P_{\rm sw}^{\rm nl}.
	\]
	Therefore, for every function \(f\),
	\[
	\mathcal E_{\rm sw}^{\rm nl}(f,f)
	=
	2\mathcal E_{\rm sw}(f,f).
	\]
	By Theorem~\ref{thm:main},
	\[
	\mathcal E_{\rm sw}(f,f)
	\ge
	\frac1M \Var_\pi(f),
	\]
	where \(\pi\) is the uniform measure on \(\Omega\).  Hence the non-lazy chain
	satisfies
	\[
	\mathcal E_{\rm sw}^{\rm nl}(f,f)
	\ge
	\frac2M \Var_\pi(f).
	\]
	
	Now take \(f=\1_U\), where \(U\subseteq\Omega\).  Since
	\(P_{\rm sw}^{\rm nl}(X,Y)=1/M\) whenever \(X\) and \(Y\) are joined by a swap
	edge, we have
	\[
	\mathcal E_{\rm sw}^{\rm nl}(\1_U,\1_U)
	=
	\frac{|\partial_{G_{\rm sw}}U|}{M|\Omega|}.
	\]
	Also
	\[
	\Var_\pi(\1_U)
	=
	\frac{|U|}{|\Omega|}\left(1-\frac{|U|}{|\Omega|}\right).
	\]
	Thus
	\[
	\frac{|\partial_{G_{\rm sw}}U|}{M|\Omega|}
	\ge
	\frac2M
	\frac{|U|}{|\Omega|}\left(1-\frac{|U|}{|\Omega|}\right).
	\]
	Equivalently,
	\[
	|\partial_{G_{\rm sw}}U|
	\ge
	2|U|\left(1-\frac{|U|}{|\Omega|}\right).
	\]
	If \(0<|U|\le |\Omega|/2\), this gives
	\[
	|\partial_{G_{\rm sw}}U|\ge |U|.
	\]
	Therefore the unweighted swap graph has edge expansion at least one.
	
	It remains to relate the swap graph to the graph of the polytope \(P_{r,c}\).
	First, every point of \(\Omega\) is a vertex of \(P_{r,c}\).  Indeed, if
	\(X\in\Omega\) were written as a convex combination of points of \(\Omega\),
	then each coordinate equal to \(1\) in \(X\) would have to be \(1\) in every
	point appearing with positive weight, and each coordinate equal to \(0\) in
	\(X\) would have to be \(0\) in every point appearing with positive weight.
	Thus every point appearing in the combination is equal to \(X\).
	
	Next, suppose \(X,Y\in\Omega\) differ by one \(2\times2\) swap.  After
	renaming rows and columns, assume that the swap uses rows \(a,b\) and columns
	\(u,v\), with
	\[
	X_{au}=X_{bv}=1,\qquad X_{av}=X_{bu}=0,
	\]
	and with \(Y\) obtained from \(X\) by switching these four entries.  Let
	\[
	D^+=\{(i,j):X_{ij}=Y_{ij}=1\},
	\qquad
	D^-=\{(i,j):X_{ij}=Y_{ij}=0\}.
	\]
	Consider the linear functional
	\[
	L(Z)=\sum_{(i,j)\in D^+} Z_{ij}
	-\sum_{(i,j)\in D^-} Z_{ij}.
	\]
	For every \(Z\in\Omega\), we have \(L(Z)\le |D^+|\).  Equality holds only if
	\(Z\) agrees with \(X\) and \(Y\) outside the four swapped entries.  Under this
	agreement, the prescribed row and column sums force the remaining \(2\times2\)
	block on rows \(a,b\) and columns \(u,v\) to have both row sums and both column
	sums equal to one.  The only two binary \(2\times2\) matrices with these row
	and column sums are
	\[
	\begin{pmatrix}1&0\\0&1\end{pmatrix}
	\quad\text{and}\quad
	\begin{pmatrix}0&1\\1&0\end{pmatrix}.
	\]
	Thus the face of \(P_{r,c}\) exposed by \(L\) has exactly the two vertices
	\(X\) and \(Y\).  Hence \([X,Y]\) is an edge of \(P_{r,c}\).
	
	Therefore the graph of \(P_{r,c}\) contains \(G_{\rm sw}\) as a spanning
	subgraph.  Adding edges cannot decrease edge expansion, so the graph of
	\(P_{r,c}\) also has edge expansion at least one.  This proves the
	Mihail--Vazirani conjecture for fixed-margin \(0/1\)-matrix polytopes.
\end{proof}

\section*{Acknowledgement} The authors thank Persi Diaconis, Péter L. Erdős, Catherine Greenhill, Mark Sellke, and Prasad Tetali for helpful comments.
	\bibliographystyle{chicago}
	\bibliography{ref}

@article{neal2024pattern,
	title={Pattern detection in bipartite networks: a review of terminology, applications, and methods},
	author={Neal, Zachary P and Cadieux, Annabell and Garlaschelli, Diego and Gotelli, Nicholas J and Saracco, Fabio and Squartini, Tiziano and Shutters, Shade T and Ulrich, Werner and Wang, Guanyang and Strona, Giovanni},
	journal={PLOS Complex Systems},
	volume={1},
	number={2},
	pages={e0000010},
	year={2024},
	publisher={Public Library of Science San Francisco, CA USA}
}

@book{rasch1993probabilistic,
	title={Probabilistic models for some intelligence and attainment tests.},
	author={Rasch, Georg},
	year={1993},
	publisher={ERIC}
}

@article{besag1989generalized,
	title={Generalized monte carlo significance tests},
	author={Besag, Julian and Clifford, Peter},
	journal={Biometrika},
	volume={76},
	number={4},
	pages={633--642},
	year={1989},
	publisher={Oxford University Press}
}

@article{ryser1957combinatorial,
	title={Combinatorial properties of matrices of zeros and ones},
	author={Ryser, Herbert J},
	journal={Canadian Journal of Mathematics},
	volume={9},
	pages={371--377},
	year={1957},
	publisher={Cambridge University Press}
}

@inproceedings{kannan1997simple,
	title={Simple Markov-chain algorithms for generating bipartite graphs and tournaments},
	author={Kannan, Ravi and Tetali, Prasad and Vempala, Santosh},
	booktitle={Proceedings of the eighth annual ACM-SIAM symposium on Discrete algorithms},
	pages={193--200},
	year={1997}
}

@incollection{gale1957theorem,
	title={A theorem on flows in networks},
	author={Gale, David},
	booktitle={Classic Papers in Combinatorics},
	pages={259--268},
	year={1957},
	publisher={Springer}
}

@article{erdHos2022mixing,
	title={The mixing time of switch Markov chains: a unified approach},
	author={Erd{\H{o}}s, P{\'e}ter L and , Catherine and Mezei, Tam{\'a}s R{\'o}bert and Mikl{\'o}s, Istv{\'a}n and Solt{\'e}sz, D{\'a}niel and Soukup, Lajos},
	journal={European Journal of Combinatorics},
	volume={99},
	pages={103421},
	year={2022},
	publisher={Elsevier}
}

@inproceedings{amanatidis2019rapid,
	title={Rapid mixing of the switch Markov chain for strongly stable degree sequences and 2-class joint degree matrices},
	author={Amanatidis, Georgios and Kleer, Pieter},
	booktitle={Proceedings of the Thirtieth Annual ACM-SIAM Symposium on Discrete Algorithms},
	pages={966--985},
	year={2019},
	organization={SIAM}
}

@article{gao2021mixing,
	title={Mixing time of the switch Markov chain and stable degree sequences},
	author={Gao, Pu and Greenhill, Catherine},
	journal={Discrete Applied Mathematics},
	volume={291},
	pages={143--162},
	year={2021},
	publisher={Elsevier}
}

@article{jerrum1990fast,
	title={Fast uniform generation of regular graphs},
	author={Jerrum, Mark and Sinclair, Alistair},
	journal={Theoretical Computer Science},
	volume={73},
	number={1},
	pages={91--100},
	year={1990},
	publisher={Elsevier}
}

@article{strona2014fast,
	title={A fast and unbiased procedure to randomize ecological binary matrices with fixed row and column totals},
	author={Strona, Giovanni and Nappo, Domenico and Boccacci, Francesco and Fattorini, Simone and San-Miguel-Ayanz, Jesus},
	journal={Nature Communications},
	volume={5},
	number={1},
	pages={4114},
	year={2014},
	publisher={Nature Publishing Group UK London}
}

@article{caputo2008spectral,
	title={On the spectral gap of the Kac walk and other binary collision processes},
	author={Caputo, Pietro},
	journal={Alea},
	volume={4},
	pages={205--222},
	year={2008}
}

@inproceedings{carstens2018speeding,
	title={Speeding up switch Markov chains for sampling bipartite graphs with given degree sequence},
	author={Carstens, Corrie Jacobien and Kleer, Pieter},
	booktitle={Approximation, Randomization, and Combinatorial Optimization. Algorithms and Techniques (APPROX/RANDOM 2018)},
	pages={36--1},
	year={2018},
	organization={Schloss Dagstuhl--Leibniz-Zentrum f{\"u}r Informatik}
}

@article{greenhill2018switch,
	title={The switch Markov chain for sampling irregular graphs and digraphs},
	author={Greenhill, Catherine and Sfragara, Matteo},
	journal={Theoretical Computer Science},
	volume={719},
	pages={1--20},
	year={2018},
	publisher={Elsevier}
}

@inproceedings{bubley1997path,
	title={Path coupling: A technique for proving rapid mixing in Markov chains},
	author={Bubley, Russ and Dyer, Martin},
	booktitle={Proceedings 38th Annual Symposium on Foundations of Computer Science},
	pages={223--231},
	year={1997},
	organization={IEEE}
}

@book{levin2017markov,
	author    = {Levin, David A. and Peres, Yuval},
	title     = {Markov Chains and Mixing Times},
	edition   = {Second},
	publisher = {American Mathematical Society},
	address   = {Providence, RI},
	year      = {2017},
	pages     = {447},
	doi       = {10.1090/mbk/107},
	isbn      = {978-1-4704-2962-1}
}

@inproceedings{mihail1992expansion,
	title={On the expansion of combinatorial polytopes},
	author={Mihail, Milena},
	booktitle={International Symposium on Mathematical Foundations of Computer Science},
	pages={37--49},
	year={1992},
	organization={Springer}
}

@article{filmus2016orthogonal,
	title={An Orthogonal Basis for Functions over a Slice of the Boolean Hypercube},
	author       = {Yuval Filmus},
	journal      = {The Electronic Journal of Combinatorics},
	volume       = {23},
	number       = {1},
	pages        = {1},
	year         = {2016},
	url          = {https://doi.org/10.37236/4567},
	doi          = {10.37236/4567},
	bibsource    = {dblp computer science bibliography, https://dblp.org}
}

@book{sagan2001symmetric,
	title={The symmetric group: representations, combinatorial algorithms, and symmetric functions},
	author={Sagan, Bruce},
	volume={203},
	year={2001},
	publisher={Springer Science \& Business Media}
}

@article{miklos2013towards,
	title={Towards Random Uniform Sampling of Bipartite Graphs with given Degree Sequence},
	volume={20}, journal={The Electronic Journal of Combinatorics}, author={Miklós, István and Erdős, Péter L and Soukup, Lajos}, year={2013}, pages={P16} }

@article{erdHos2018new,
	title={New classes of degree sequences with fast mixing swap Markov chain sampling},
	author={Erd{\H{o}}s, P{\'e}ter L and Mikl{\'o}s, Istv{\'a}n and Toroczkai, Zolt{\'a}n},
	journal={Combinatorics, Probability and Computing},
	volume={27},
	number={2},
	pages={186--207},
	year={2018},
	publisher={Cambridge University Press}
}

@article{cooper2007sampling,
	title={Sampling regular graphs and a peer-to-peer network},
	author={Cooper, Colin and Dyer, Martin and Greenhill, Catherine},
	journal={Combinatorics, Probability and Computing},
	volume={16},
	number={4},
	pages={557--593},
	year={2007},
	publisher={Cambridge University Press}
}

@article{greenhill2011polynomial,
	title={A Polynomial Bound on the Mixing Time of a Markov Chain for Sampling Regular Directed Graphs},
	author={Greenhill, Catherine},
	volume={18}, DOI={10.37236/721}, number={1}, journal={The Electronic Journal of Combinatorics}, year={2011}, pages={P234} 
}

@inproceedings{greenhill2014switch,
	title={The switch Markov chain for sampling irregular graphs},
	author={Greenhill, Catherine},
	booktitle={Proceedings of the twenty-sixth annual acm-siam symposium on discrete algorithms},
	pages={1564--1572},
	year={2014},
	organization={SIAM}
}

@article{qin2024spectral,
	title={Spectral gap bounds for reversible hybrid {G}ibbs chains},
	author={Qin, Qian and Ju, Nianqiao and Wang, Guanyang},
	year={2025},
	volume={53},
	pages={1613--1638},
	journal={Annals of Statistics},
	publisher={Institute of Mathematical Statistics}
}

@incollection{kaibel2004expansion,
	title={On the expansion of graphs of 0/1-polytopes},
	author={Kaibel, Volker},
	booktitle={The Sharpest Cut: The Impact of Manfred Padberg and His Work},
	pages={199--216},
	year={2004},
	publisher={SIAM}
}

@article{anari2021spectral,
	title={Spectral independence in high-dimensional expanders and applications to the hardcore model},
	author={Anari, Nima and Liu, Kuikui and Gharan, Shayan Oveis},
	journal={SIAM Journal on Computing},
	volume={53},
	number={6},
	pages={FOCS20--1},
	year={2021},
	publisher={SIAM}
}

@inproceedings{blanca2022mixing,
	title={On mixing of markov chains: Coupling, spectral independence, and entropy factorization},
	author={Blanca, Antonio and Caputo, Pietro and Chen, Zongchen and Parisi, Daniel and {\v{S}}tefankovi{\v{c}}, Daniel and Vigoda, Eric},
	booktitle={Proceedings of the 2022 Annual ACM-SIAM Symposium on Discrete Algorithms (SODA)},
	pages={3670--3692},
	year={2022},
	organization={SIAM}
}

@article{feng2022rapid,
	title={Rapid mixing from spectral independence beyond the Boolean domain},
	author={Feng, Weiming and Guo, Heng and Yin, Yitong and Zhang, Chihao},
	journal={ACM Transactions on Algorithms (TALG)},
	volume={18},
	number={3},
	pages={1--32},
	year={2022},
	publisher={ACM New York, NY}
}

@book{liu2023spectral,
	title={Spectral Independence a New Tool to Analyze Markov Chains},
	author={Liu, Kuikui},
	year={2023},
	publisher={University of Washington}
}

@article{erdHos2018efficiently,
	title={Efficiently sampling the realizations of bounded, irregular degree sequences of bipartite and directed graphs},
	author={Erd{\H{o}}s, P{\'e}ter L and Mezei, Tam{\'a}s R{\'o}bert and Mikl{\'o}s, Istv{\'a}n and Solt{\'e}sz, D{\'a}niel},
	journal={Plos one},
	volume={13},
	number={8},
	pages={e0201995},
	year={2018},
	publisher={Public Library of Science San Francisco, CA USA}
}

@article{tikhomirov2024regularized,
	title={Regularized modified log-{S}obolev inequalities and comparison of {M}arkov chains},
	author={Tikhomirov, Konstantin and Youssef, Pierre},
	journal={The Annals of Probability},
	volume={52},
	number={4},
	pages={1201--1224},
	year={2024},
	publisher={Institute of Mathematical Statistics}
}

@article{tikhomirov2023sharp,
	title={Sharp {P}oincar{\'e} and log-{S}obolev inequalities for the switch chain on regular bipartite graphs},
	author={Tikhomirov, Konstantin and Youssef, Pierre},
	journal={Probability Theory and Related Fields},
	volume={185},
	number={1-2},
	pages={89--184},
	year={2023},
	publisher={Springer Nature BV}
}

@article{dyer2021sampling,
	title={Sampling hypergraphs with given degrees},
	author={Dyer, Martin and Greenhill, Catherine and Kleer, Pieter and Ross, James and Stougie, Leen},
	journal={Discrete Mathematics},
	volume={344},
	number={11},
	pages={112566},
	year={2021},
	publisher={Elsevier}
}

@article{rao1996markov,
	title={A Markov chain Monte Carlo method for generating random (0, 1)-matrices with given marginals},
	author={Rao, A Ramachandra and Jana, Rabindranath and Bandyopadhyay, Suraj},
	journal={Sankhy{\=a}: The Indian Journal of Statistics, Series A},
	pages={225--242},
	year={1996},
	publisher={JSTOR}
}

@article{diaconis1998algebraic,
	title={Algebraic algorithms for sampling from conditional distributions},
	author={Diaconis, Persi and Sturmfels, Bernd},
	journal={The Annals of statistics},
	volume={26},
	number={1},
	pages={363--397},
	year={1998},
	publisher={Institute of Mathematical Statistics}
}

@article{barvinok2010number,
	title={On the number of matrices and a random matrix with prescribed row and column sums and 0--1 entries},
	author={Barvinok, Alexander},
	journal={Advances in Mathematics},
	volume={224},
	number={1},
	pages={316--339},
	year={2010},
	publisher={Elsevier}
}

@article{canfield2008asymptotic,
	title={Asymptotic enumeration of dense 0--1 matrices with specified line sums},
	author={Canfield, E Rodney and Greenhill, Catherine and McKay, Brendan D},
	journal={Journal of Combinatorial Theory, Series A},
	volume={115},
	number={1},
	pages={32--66},
	year={2008},
	publisher={Elsevier}
}

@article{greenhill2006asymptotic,
	title={Asymptotic enumeration of sparse 0--1 matrices with irregular row and column sums},
	author={Greenhill, Catherine and McKay, Brendan D and Wang, Xiaoji},
	journal={Journal of Combinatorial Theory, Series A},
	volume={113},
	number={2},
	pages={291--324},
	year={2006},
	publisher={Elsevier}
}

@article{jerrum2004polynomial,
	title={A polynomial-time approximation algorithm for the permanent of a matrix with nonnegative entries},
	author={Jerrum, Mark and Sinclair, Alistair and Vigoda, Eric},
	journal={Journal of the ACM (JACM)},
	volume={51},
	number={4},
	pages={671--697},
	year={2004},
	publisher={ACM New York, NY, USA}
}

@article{bezakova2007sampling,
	title={Sampling binary contingency tables with a greedy start},
	author={Bez{\'a}kov{\'a}, Ivona and Bhatnagar, Nayantara and Vigoda, Eric},
	journal={Random Structures \& Algorithms},
	volume={30},
	number={1-2},
	pages={168--205},
	year={2007},
	publisher={Wiley Online Library}
}

@article{delsarte1978hahn,
	title={Hahn polynomials, discrete harmonics, and {$t$}-designs},
	author={Delsarte, Philippe},
	journal={SIAM Journal on Applied Mathematics},
	volume={34},
	number={1},
	pages={157--166},
	year={1978},
	doi={10.1137/0134012}
}

@misc{caracciolo1992two,
	title={Two remarks on simulated tempering},
	author={Caracciolo, Sergio and Pelissetto, Andrea and Sokal, Alan D},
	howpublished={Unpublished manuscript},
	year={1992}
}

@article{madras2002markov,
	title={Markov chain decomposition for convergence rate analysis},
	author={Madras, Neal and Randall, Dana},
	journal={Annals of Applied Probability},
	pages={581--606},
	year={2002},
	publisher={JSTOR}
}

@misc{ge2018simulated,
	title={{Simulated tempering Langevin Monte Carlo II: An improved proof using soft Markov chain decomposition}},
	author={Ge, Rong and Lee, Holden and Risteski, Andrej},
	howpublished={arXiv preprint},
	year={2018}
}

@article{andrieu2018uniform,
	title={{Uniform ergodicity of the iterated conditional SMC and geometric ergodicity of particle Gibbs samplers}},
	author={Andrieu, Christophe and Lee, Anthony and Vihola, Matti},
	year={2018},
	journal={Bernoulli},
	volume={24},
	pages={842--872}
}

@article{anari2024log,
  title={Log-concave polynomials II: High-dimensional walks and an FPRAS for counting bases of a matroid},
  author={Anari, Nima and Liu, Kuikui and Gharan, Shayan Oveis and Vinzant, Cynthia},
  journal={Annals of Mathematics},
  volume={199},
  number={1},
  pages={259--299},
  year={2024},
  publisher={Department of Mathematics, Princeton University Princeton, New Jersey, USA}
}
\end{document}